\long\def\symbolfootnote[#1]#2{\begingroup%
\def\thefootnote{\fnsymbol{footnote}}\footnote[#1]{#2}\endgroup}
\newtheorem{thm}{Theorem}[section]
\newtheorem{prop}[thm]{Proposition}
\newtheorem{corollary}[thm]{Corollary}
\newtheorem{lemma}[thm]{Lemma}
\theoremstyle{remark}
\theoremstyle{definition}
\newtheorem{remark}[thm]{Remark}
\newtheorem{defi}[thm]{Definition}
\newtheorem*{namedtheorem}{\theoremname}
\newcommand{\theoremname}{testing}
\newcommand{\N}{\mathbb{N}}
\newcommand{\Z}{\mathbb{Z}}
\def\Aut{\operatorname{Aut}}
\def\Mod{\operatorname{Mod}}
\def\rank{\operatorname{rank}}
\def\diam{\operatorname{diam}}
\newcommand{\oset}[2]{%
  {\mathop{#2}\limits^{\vbox to -.5\ex@{\kern-\tw@\ex@
   \hbox{\scriptsize #1}\vss}}}}
\begin{document}
\title[Multicurve graphs on infinite-type surfaces]{Two remarks about multicurve graphs on infinite-type surfaces}	

\author{Julio Aroca}
\address{Instituto de Ciencias Matemáticas, C/ Nicolás Cabrera, 13-15, E-28049 Madrid, Spain}
\email{julio.aroca@icmat.es}

\date{\today}

\begin{abstract}
After Fossas-Parlier \cite{FP}, we consider two   graphs $\mathcal{G}_{0}(S)$ and $\mathcal{G}_{\infty}(S)$, constructed from multicurves on connected, orientable surfaces of infinite-type. 

Our first result asserts that  $\mathcal{G}_{\infty}(S)$ has finite diameter, which extends a result of Fossas-Parlier \cite{FP}. 
Next, we prove that the group of (label-preserving) automorphisms of  $\mathcal{G}_{0}(S)$ is the extended mapping class group of $S$, which may be regarded as an infinite-type analog of a theorem of Margalit \cite{DM} about pants complexes. 
\end{abstract}
\maketitle
\section{Introduction}
There has been a recent surge of interest in finding combinatorial models for mapping class groups of infinite-type surfaces, see \cite{AFP,AF,BA,BDR,DF,FP,HMF,HF,R}.

In \cite{FP}, A. Fossas and H. Parlier defined a family of graphs $\mathcal{G}_{k}(S)$ with $k \in \mathbb{N} \cup \{0, \infty\}$, constructed from multicurves on connected, orientable surfaces of infinite-type equipped with an upper bounded hyperbolic metric. 

In this note we will concentrate on the special cases $\mathcal{G}_{0}(S)$ and $\mathcal{G}_{\infty}(S)$, which we now briefly define; see Section \ref{S3} for a precise definition.

The vertices of $\mathcal{G}_{\infty}(S)$ are multicurves $\mu \subset S$ such that the supremum of the lengths of the curves in $\mu$ and the supremum of the complexities of the components of $S \backslash \mu$ are both finite. In \cite[\S 5]{FP}, it is proved that $\diam(\mathcal{G}_{\infty}(S)) \leq 3$ when $S$ has exactly two ends, each of which is non-planar. Our first objective is to generalize this result to arbitrary infinite-type surfaces: 

\begin{thm}\label{MT}
Let $S = S_\mathcal{H}$ be a connected orientable infinite-type upper-bounded hyperbolic surface. Then:
 $$\diam(\mathcal{G}_{\infty}(S)) \leq 3.$$                
\end{thm}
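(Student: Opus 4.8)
The plan is to bound the distance between an arbitrary pair of vertices $\mu,\mu'$ of $\mathcal{G}_{\infty}(S)$, and the architecture I would use is the same as in \cite[\S 5]{FP}: funnel both $\mu$ and $\mu'$ into a single distinguished family $\mathcal{R}$ of vertices, show that $\mathcal{R}$ has diameter at most $1$ in $\mathcal{G}_{\infty}(S)$, and show that every vertex of $\mathcal{G}_{\infty}(S)$ is adjacent to some member of $\mathcal{R}$. Then, choosing $r,r'\in\mathcal{R}$ with $\mu$ adjacent to $r$ and $\mu'$ adjacent to $r'$, one gets $d(\mu,\mu')\le d(\mu,r)+d(r,r')+d(r',\mu')\le 1+1+1=3$. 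The whole point of the paper's generalization is to make the two claims survive when the end space of $S$ is arbitrary, rather than a pair of non-planar ends.

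The family $\mathcal{R}$ would be built around the pants decomposition $P=P_{\mathcal{H}}$ carrying the hyperbolic structure of $S=S_{\mathcal{H}}$. Since $\mathcal{H}$ is upper bounded, every curve of $P$ has length at most some constant $K$, so $P$ is itself a vertex of $\mathcal{G}_{\infty}(S)$ — in fact of $\mathcal{G}_{0}(S)$ — and this is the one place the hypothesis ``upper bounded'' is essential. Fixing an exhaustion of $S$ by compact subsurfaces with boundary on $P$, I would let $\mathcal{R}$ consist of the vertices that coincide with $P$ off one of these compact pieces. The crucial normal-form step is then: for an arbitrary vertex $\mu$, after isotoping its curves to geodesics, the complementary pieces have uniformly bounded complexity and uniformly bounded boundary length, so by a Bers-type estimate each piece carries a pants decomposition with curves of length bounded in terms of those two bounds only; patching these in refines $\mu$ to a bounded-length pants decomposition $P_{\mu}\supseteq\mu$, and a controlled surgery supported near the ends moves $P_{\mu}$ to a member of $\mathcal{R}$. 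Checking against the adjacency relation of Section~\ref{S3} that this modification is legal in a single step, and that two members of $\mathcal{R}$ (which by construction agree with $P$, hence with each other, off a common compact piece) are adjacent, are exactly the places where infinite type enters: the ends of $S$ supply the ``room'' the adjacency relation needs.

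I expect the main obstacle to be carrying out the normal-form step uniformly in the topology of $S$. The Fossas--Parlier argument leans on $S$ having exactly two ends, both non-planar, which pins down a canonical bi-infinite model; in general one must simultaneously handle planar and non-planar ends, ends accumulated by genus or not, and uncontrolled topology in between, producing in every case one adjacent vertex that is standard near \emph{all} ends at once. The delicate point is that a generic vertex $\mu$ may be transverse to $P$ almost everywhere, so ``standardizing $\mu$ near the ends'' cannot be achieved by deleting curves of $\mu$; it requires a genuine rebuilding of $\mu$ there, and one must keep the lengths of the new curves and the complexities of the new complementary pieces uniformly bounded while simultaneously arranging the whole modification to be a single move of the Section~\ref{S3} adjacency relation. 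A secondary, more routine issue is verifying the diameter-$\le 1$ claim for $\mathcal{R}$ directly against that definition and confirming that the low-complexity exceptional cases obstructing the analogous statement for the finite-type pants graph simply do not occur here, precisely because $S$ is of infinite type.
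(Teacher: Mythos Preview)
Your architecture has a genuine gap, and it is exactly at the point you yourself flag as delicate. Adjacency in $\mathcal{G}_\infty(S)$ means \emph{disjointness} of multicurves, so for your scheme to work you need, for every vertex $\mu$, a vertex $r$ that (i) agrees with $P$ off a compact set and (ii) is disjoint from $\mu$. But if $\mu$ is a bounded-length pants decomposition transverse to $P$ everywhere---for instance one obtained from $P$ by performing an elementary move on every curve---then \emph{every} curve of $P$ outside any given compact set is crossed by some curve of $\mu$, and no $r$ satisfying (i) can satisfy (ii). The surgery you propose, moving $P_\mu$ to agree with $P$ near the ends, destroys the containment $\mu\subset P_\mu$ that was the only reason $P_\mu$ was disjoint from $\mu$ in the first place. (There is also a smaller problem: two members of your $\mathcal{R}$ agree off a common compact, but their curves \emph{inside} that compact can perfectly well cross, so $\diam(\mathcal{R})\le 1$ is false as stated.)

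The paper sidesteps this by abandoning any fixed target family $\mathcal{R}$. Given $\mu,\nu$, it first refines them to pants decompositions $v\supset\mu$ and $w\supset\nu$, and then takes the two intermediate vertices to be \emph{submulticurves} $v'\subset v$ and $w'\subset w$; thus $i(\mu,v')=0$ and $i(\nu,w')=0$ are automatic. The only remaining task is to arrange $i(v',w')=0$, and this is the sole role of the fixed pants decomposition $P=\mathcal{P}$: one extracts from $\mathcal{P}$ a system of separating ``level'' curves $\Lambda_i$ exhausting $S$, passes to a sparse subsequence of levels so that the minimal subsurfaces $S_v(\gamma)$, $S_w(\gamma)$ (carrying all curves of $v$, resp.\ $w$, that meet a level curve $\gamma$) are disjoint whenever $\gamma$ ranges over distinct chosen levels, and then lets $v'$ collect the curves of $v$ lying in the $S_v(\gamma)$ for even-indexed levels and $w'$ the curves of $w$ for odd-indexed levels. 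So $\mathcal{P}$ serves only as a coordinate grid to interleave the supports of $v'$ and $w'$; neither intermediate vertex is required to resemble $\mathcal{P}$ anywhere, which is precisely what makes the argument survive when $\mu$ and $\nu$ are everywhere transverse to it.
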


Next, we consider (a modification of) $\mathcal{G}_0(S)$, which may be regarded as an analog of the \textit{pants complex} \cite{DM} for infinite-type surfaces. Here, the vertices of $\mathcal{G}_0(S)$ are pants decompositions of $S$, and 
two vertices span an edge when they differ by exactly one elementary move, or by infinitely many of them performed simultaneously on pairwise disjoint subsurfaces `sufficiently far' (see Definition \ref{G0} for a formal description). We will see in Section \ref{S3}  that the graph $\mathcal{G}_0(S)$ is connected. 

In \cite{DM}, Margalit proved that the group of simplicial automorphisms of the pants complex coincides with the extended mapping class group. Thus, a natural question is whether the same holds for the graph $\mathcal{G}_0(S)$ introduced above. Our next result proves that this is indeed the case, provided one considers only {\em label-preserving} automorphisms. More concretely, we will say that an edge is a 1-edge (resp. $\infty$-edge) if its endpoints differ by one (resp. infinitely many) elementary move. Consider the subgroup $\Aut_L(\mathcal{G}_0(S))$ of $\Aut(\mathcal{G}_0(S))$ whose elements preserve the labelling of $\mathcal{G}_0(S)$. We will prove: 

\begin{thm}\label{MT2}
Let $S$ be an orientable connected surface of infinite-type. Then
$\Aut_L( \mathcal{G}_0(S))$ is isomorphic to $\Mod^\pm(S)$, the extended mapping class group of $S$.
\end{thm}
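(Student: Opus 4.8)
The plan is to adapt Margalit's argument \cite{DM} for the pants complex to the present infinite-type graph $\mathcal{G}_0(S)$, the point being that a \emph{label-preserving} automorphism preserves the subgraph spanned by $1$-edges, which is where all of Margalit's combinatorics lives. First I would construct the natural homomorphism $\Psi\colon \Mod^\pm(S)\to \Aut_L(\mathcal{G}_0(S))$: a mapping class sends pants decompositions to pants decompositions and elementary moves to elementary moves, and it preserves disjointness of subsurfaces as well as the ``sufficiently far'' condition of Definition \ref{G0}, all of which are topological; hence it induces a label-preserving automorphism and $\Psi$ is a homomorphism. Injectivity is easy: if $h$ acts trivially on $\mathcal{G}_0(S)$ it fixes every pants decomposition, hence every simple closed curve (every curve extends to a pants decomposition of $S$ by a maximality argument), and a mapping class of an infinite-type surface fixing the isotopy class of every simple closed curve is trivial by the Alexander method for infinite-type surfaces; there are no small exceptional surfaces to worry about here since $S$ has infinite type. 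The whole content of the theorem is therefore the surjectivity of $\Psi$.

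For surjectivity, fix $\phi\in\Aut_L(\mathcal{G}_0(S))$; since $\phi$ preserves the labelling it maps the subgraph spanned by $1$-edges to itself (without this hypothesis an automorphism could trade paths of $1$-edges for $\infty$-edges and wreck everything below). Following Margalit, the key objects are the \emph{Farey subgraphs}: for a pants decomposition $P$ and $c\in P$, let $F(P,c)$ be the full subgraph on all pants decompositions agreeing with $P$ off the subsurface $N=S\setminus(P\setminus c)$ --- a one-holed torus or a four-holed sphere --- so that $F(P,c)$ is isomorphic to the Farey graph of $N$ and all of its edges are $1$-edges. I would first transplant Margalit's purely combinatorial characterisation (in terms of $1$-edges only, so it goes through verbatim as it is local) of which subgraphs of $\mathcal{G}_0(S)$ arise this way, so that $\phi$ permutes Farey subgraphs. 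From $F(P,c)$ one recovers the multicurve $P\setminus c$ as the intersection of the curve-sets of its vertices, hence the subsurface $N$, and then each vertex $v$ of $F(P,c)$ is tagged by the unique curve of $v$ not in $P\setminus c$; this produces for every pair (Farey subgraph $F$, vertex $v\in F$) a curve $c(F,v)$, and $c\in P$ iff $P$ is a vertex of some Farey subgraph $F$ with $c(F,P)=c$.

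Next I would produce from $\phi$ an automorphism of the curve graph $\mathcal{C}(S)$ by setting $\phi_*(c)=c\bigl(\phi(F(P,c)),\phi(P)\bigr)$ for any $P\ni c$. The main combinatorial lemma --- again following Margalit's analysis of how the tags of two $1$-adjacent vertices are matched, adapted to the infinite-type setting --- is that this is well defined independently of $P$, is a bijection on isotopy classes of curves, and sends disjoint curves to disjoint curves; thus $\phi_*\in\Aut(\mathcal{C}(S))$. By the rigidity of the curve graph of an infinite-type surface (see \cite{HMF}), $\Aut(\mathcal{C}(S))\cong\Mod^\pm(S)$, so $\phi_*$ is realised by some $h\in\Mod^\pm(S)$. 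Replacing $\phi$ by $\Psi(h)^{-1}\circ\phi$, we may assume $\phi$ fixes every isotopy class of simple closed curve; then it also fixes the tag of every pair $(F,v)$, so for each vertex $P$ and each $c\in P$ the Farey subgraph $\phi(F(P,c))$ still carries the tag $c$ at $\phi(P)$, whence $c\in\phi(P)$ iff $c\in P$. Hence $\phi$ fixes every vertex, and since $\mathcal{G}_0(S)$ is a simple graph, $\phi=\mathrm{id}$; therefore the original $\phi$ equals $\Psi(h)$ and $\Psi$ is an isomorphism.

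The main obstacle is the combinatorial heart of the surjectivity step: transplanting Margalit's characterisation of Farey subgraphs and, above all, his matching of the curve-tags across a $1$-edge to $\mathcal{G}_0(S)$, so that $\phi_*$ is well defined and preserves disjointness. One must verify that the presence of $\infty$-edges never interferes with these local arguments --- which should reduce to label-preservation and the fact that every $\infty$-edge connects vertices differing on an infinite set of curves, so it cannot be confused with the finite modifications governing Farey subgraphs --- and one must handle the two types of building-block subsurfaces (one-holed torus versus four-holed sphere) exactly as in finite type; but, $S$ being of infinite type, there are no global exceptional surfaces affecting either the Alexander method or the curve-graph rigidity input.
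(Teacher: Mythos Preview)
Your overall architecture matches the paper's: produce from a label-preserving automorphism an automorphism of the curve graph via marked Farey subgraphs, invoke curve-graph rigidity for infinite-type surfaces, and conclude. The genuine gap is in the step you yourself flag as ``the main obstacle'': the well-definedness of $\phi_*(c)$ as $P$ varies over pants decompositions containing $c$. You propose to transplant Margalit's matching of tags across a $1$-edge and then assert that the $\infty$-edges ``should'' not interfere. But on an infinite-type surface the subgraph of $\mathcal{G}_0(S)$ spanned by $1$-edges is \emph{disconnected}: two pants decompositions $P,P'$ that both contain $c$ can differ on infinitely many curves, and no finite chain of elementary moves connects them. Hence any path from $(F,P)$ to $(F',P')$ in $\mathcal{G}_0(S)$ must traverse $\infty$-edges, and the well-definedness argument is forced to track the tag $c$ across such an edge. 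Label-preservation alone does not do this; it only tells you that $\phi$ sends $\infty$-edges to $\infty$-edges, not that the curve represented by $(F_m,X_m)$ agrees with that represented by $(F',X')$ after applying $\phi$ when $X_m$ and $X'$ are joined by an $\infty$-edge.

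The paper supplies exactly the missing device: it introduces \emph{$\infty$-alternating squares}, namely $4$-cycles $X_1\!-\!X_2\!-\!X_3\!-\!X_4\!-\!X_1$ whose edges alternate between $1$- and $\infty$-edges and which satisfy a non-degeneracy condition (the diagonals are not $\infty$-edges). It checks by hand that in such a square $X_1\cap X_2\cap X_3\cap X_4=X_i\cap X_{i+1}\cap X_{i+2}$ for all $i$, so the tag is constant around the square, and observes that the definition is purely in terms of edge-labels, hence preserved by any $A\in\Aut_L(\mathcal{G}_0(S))$. Given two marked Farey graphs $(F,X)$, $(F',X')$ representing the same curve with $X,X'$ joined by an $\infty$-edge, the paper manufactures an intermediate $(F_m,X_m)$ with $X,X_m$ differing by a single elementary move (so Margalit's alternating-loop argument applies) and then embeds $X_m,X'$ as opposite $1$-edges of an $\infty$-alternating square to propagate the tag across the $\infty$-edge. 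Without this (or an equivalent construction) your well-definedness step does not go through.
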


The plan of the paper is as follows. In Section \ref{S2} we will explain the classification of  infinite-type surfaces, plus some basic facts about multicurves and hyperbolic metrics on surfaces. 
Section \ref{S3} is devoted to the definition of the graphs $\mathcal{G}_{0}(S)$ and $\mathcal{G}_{\infty}(S)$. Finally, in Sections \ref{S4} and \ref{S5} we prove Theorems \ref{MT} and \ref{MT2}, respectively. 

\section{Surfaces, curves and hyperbolic metrics}\label{S2}
In this section, we introduce the classification theorems for finite and infinite-type surfaces and give some basic definitions about curves, multicurves and hyperbolic metrics.

Let $S$ be a compact connected orientable surface of finite topological type, i.e., with finitely generated fundamental group. Then $S \simeq S_{g,b}$, where $g$ is the genus, and $b$ the number of boundary components. The classification of such surfaces is well known: two finite-type connected orientable surfaces $S_{g,b}$ and $S'_{g',b'}$ are homeomorphic if and only if $g = g'$ and $b = b'$. If $S$ is an infinite-type surface then the homeomorphism type of $S$ is uniquely determined by its genus, its number of boundary components and its \textit{space of ends}. We refer the reader to \cite{IR} and \cite{PM} for a full discussion on the space of ends of a surface. From now on, we will assume that $S$ has no punctures or planar ends.

We briefly proceed to give some basic definitions about curves, multicurves and hyperbolic metrics. These objects are thoroughly covered in \cite{FM}.

Let $S$ be a finite or infinite-type connected orientable surface. A \textit{closed curve} $\alpha \subset S$ is a continuous map $\alpha: S^1 \rightarrow S$. To simplify, we denote $\alpha$ as the image of $S^1$ in $S$ by the homonymous map. In addition, we only consider the isotopy classes of closed curves in $S$ so by an abuse of notation, $\alpha = [\alpha]$ will be taken as a representative of all curves isotopic to $\alpha$. We will say that a curve is \textit{essential} if it is not isotopic to a point or a boundary component; and it is \textit{simple} if it may be realized without self-intersections. 

The \textit{geometric intersection number} $i(\alpha,\beta)$ between two curves $\alpha$ and $\beta$ is
\begin{align*}
i(\alpha,\beta) = \min\{\vert \alpha \cap \beta \vert, \alpha \in [\alpha], \beta \in [\beta] \}.
\end{align*}
Two curves $\alpha$ and $\beta$ are disjoint if and only if $i(\alpha,\beta) = 0$.

A \textit{multicurve} $\mu \subset S$ is a collection of essential simple closed curves $c_i$, such that they are pairwise disjoint and non-isotopic. Two multicurves $\mu$ and $\nu$ can be realized disjointly if:
\begin{align*}
\sum_{\alpha \in \mu, \beta \in \nu} i(\alpha,\beta) = 0.
\end{align*}

A \textit{pants decomposition} of $S$ is a locally finite multicurve $\mu \subset S$ which is maximal with respect to inclusion. The \textit{complexity} of a surface $S$ is the cardinality of a maximal multicurve of $S$, like a pants decomposition. For finite-type surfaces, the complexity is given by the following formula:
\begin{align*}
\kappa(S_{g,b}) = 3g-3+b.
\end{align*}
For infinite-type surfaces, the complexity is infinite.

It is a well known fact of hyperbolic geometry that every topological surface $S$ that has negative Euler characteristic admits a hyperbolic metric $\mathcal{H}$, which is a riemannian metric of constant curvature $-1$. We denote the pair $(S, \mathcal{H}) = S_{\mathcal{H}}$ as \textit{hyperbolic surface}. A metric allows us to compute the length of any curve $\alpha \subset S_\mathcal{H}$. In order to build a well defined length function, it must be only considered the length of the unique geodesic $\alpha_g$ isotopic to $\alpha$, which depends on the metric \cite[Proposition 1.3]{FM}. Thus the \textit{length} of $\alpha$ is defined as: $l(\alpha) = l(\alpha_g)$. We will focus our attention on the following family of hyperbolic metrics:

\begin{defi}\cite[\S 8]{AL}
Let $S_\mathcal{H}$ be a hyperbolic surface, we say that $S_\mathcal{H}$ is \textit{upper-bounded} with respect to some pants decomposition $\mathcal{P}$ if:
 \begin{align*}
 \exists M > 0 \ \mbox{such that} \ l(\alpha) < M \ \forall \alpha \in \mathcal{P}.
 \end{align*}
\end{defi}
We highlight that there are hyperbolic metrics which are not upper-bounded for any pants decomposition $\mathcal{P}$. We only need to find, for every infinite-type topological surface, a hyperbolic metric that is upper-bounded. This can be always done if the surface is built gluing pairs of pants, that is, surfaces homeomorphic to $S_{0,3}$, whose boundary components have length less than $M$. Then, the hyperbolic metric $\mathcal{H}$ is completely determined by its Fenchel-Nielsen coordinates \cite{AL}.

\section{Graphs $\mathcal{G}_k(S)$ and connectedness}\label{S3}

We proceed to describe two graphs developed by Fossas-Parlier \cite{FP}, which are constructed from multicurves on $S$: $\mathcal{G}_0(S)$ and $\mathcal{G}_\infty(S)$. From now on, let $S_{\mathcal{H}} = S$ be an infinite-type surface equipped with an upper-bounded hyperbolic metric. 

\begin{defi}\cite{FP}
$\mathcal{G}_{\infty}(S)$ is defined as the simplicial graph whose vertices are multicurves $\mu \subset S$ such that:
\begin{equation}
\sup\{\ell(\alpha_{\mu}) \vert \alpha_{\mu} \in \mu \ \mbox{is a connected component of} \ \mu \} < \infty.
\end{equation}
\begin{equation}
\sup\{\kappa(\Gamma) \vert \Gamma \ \mbox{is a connected component of} \ S \backslash \mu \} < \infty.
\end{equation}
Two vertices span an edge if they can be realized disjointly on $S$.
\end{defi}

We give some necessary definitions before defining $\mathcal{G}_{0}(S)$, which will be a slight modification of the one described in \cite{FP}.

\begin{defi} \cite[\S 2]{AR}
A multicurve $\mu \subset S$ has \textit{deficiency} $n$ if $\kappa(S \backslash \mu) = n$, that is, there exists a collection of simple closed curves $\{\alpha_1, \alpha_2,..., \alpha_n\}$ such that $\mu \sqcup \{\alpha_1, \alpha_2,..., \alpha_n\}$ is a pants decomposition of $S$.
\end{defi}

\begin{defi}\label{EM}
Two pants decompositions $X_1$ and $X_2$ differ by an \textit{elementary move} if there exists a deficiency-1 multicurve $\mu$ such that $X_1 = \mu \sqcup \alpha$ and $X_2 = \mu \sqcup \beta$, where $\alpha$ and $\beta$ intersect minimally. We say that $\alpha$ and $\beta$ \textit{intersect minimally} if $i(\alpha, \beta) = 1$ when the minimal subsurface containing them is $S_{1,1}$ and $i(\alpha, \beta) = 2$ when it is $S_{0,4}$.
\end{defi}

Note that $S_{1,1}$ and $S_{0,4}$ are the only two options because they are the unique surfaces of complexity one. In addition, $X_1 \cap X_2 = \mu$, since both pants decompositions share the same deficiency-1 multicurve.

\begin{defi}
Let $X$ be a pants decomposition of $S$ and let $\alpha$, $\beta$ two non-isotopic simple closed curves in $X$. If $\alpha$ and $\beta$ are the boundary of some pair of pants in $X$, then $\alpha$ and $\beta$ are \textit{sisters}. If they are not, but they belong to two different pair of pants that share any boundary component, they are \textit{contiguous}. Otherwise, they are \textit{far}. 
\end{defi} 

It is straightforward to check that the previous properties are invariant by elementary moves: let $\mu$ be a deficiency-1 curve and $X_1 = \mu \sqcup \alpha$, $X_2 = \mu \sqcup \beta$ two pants decompositions which differ by one elementary move. Then $\forall \gamma \subset \mu$, $\gamma$ and $\alpha$ are sisters (resp. contiguous or far) if and only if $\gamma$ and $\beta$ are sisters (resp. contiguous or far).

\begin{defi}
\label{G0} We define $\mathcal{G}_{0}(S)$ as the simplicial graph whose vertices are pants decompositions of $S$. Two vertices $v$ and $w$ span an edge if the pants decompositions associated to $v$ and $w$ differ by exactly one elementary move or by a countable subset of elementary moves replacing pairwise far curves.
\end{defi}

The difference between the original graph defined in \cite{FP} and this one is that in the former, simultaneously elementary moves between contiguous curves are allowed. Note that $\mathcal{G}_{0}(S)$ can be seen as the analog of the pants complex $P(S)$ for infinite-type surfaces, since it is connected. In addition, the graphs $\mathcal{G}_{\infty}(S)$ and $\mathcal{G}_{0}(S)$ are non-empty since the associated pants decomposition $\mathcal{P}$ of the upper-bounded hyperbolic metric $\mathcal{H}$ is a vertex of both. 

\begin{prop}
$\mathcal{G}_{\infty}(S)$ and $\mathcal{G}_{0}(S)$ are both connected.
\end{prop}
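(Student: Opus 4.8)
The plan is to handle the two graphs separately, since their edge relations are quite different in character. For $\mathcal{G}_{\infty}(S)$ the argument should be almost immediate from the definition: I would first observe that the base pants decomposition $\mathcal{P}$ associated to the upper-bounded hyperbolic metric $\mathcal{H}$ is a vertex, because all its curves have length $< M$ and its complementary components are pairs of pants, hence of complexity $0$. Then, given any vertex $\mu$, I claim $\mu$ and $\mathcal{P}$ are at distance at most $2$: it suffices to produce a multicurve $\nu$ that is simultaneously realized disjointly from both $\mu$ and $\mathcal{P}$, satisfies the length bound and the complexity bound, and is itself a vertex. One natural candidate is to take, inside each complementary component $\Gamma$ of $S \setminus \mu$, a short multicurve that cuts $\Gamma$ into pieces of uniformly bounded complexity and whose curves are short in the metric $\mathcal{H}$ — for instance a sub-multicurve of $\mathcal{P}$ chosen compatibly, or curves that can be taken geodesic and short because the metric is upper-bounded. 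Making this choice so that $\nu \cup \mu$ and $\nu \cup \mathcal{P}$ are both legitimate configurations connects $\mu$ to $\mathcal{P}$ through $\nu$, giving connectedness (indeed with diameter $\le 4$, which Theorem~\ref{MT} later sharpens to $\le 3$).

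For $\mathcal{G}_{0}(S)$ the key input is the classical fact, due to Hatcher–Thurston (and the local finiteness / connectivity of the pants graph in the finite-type setting), that any two pants decompositions of a \emph{finite-type} surface are related by a finite sequence of elementary moves. I would fix the base pants decomposition $\mathcal{P}$ and show that every pants decomposition $X$ can be joined to $\mathcal{P}$ by a path. The idea is to compare $X$ and $\mathcal{P}$ on larger and larger finite subsurfaces: exhaust $S$ by an increasing sequence of finite-type subsurfaces $S_1 \subset S_2 \subset \cdots$, each obtained by cutting along curves of $\mathcal{P}$, and use the finite-type result to modify $X$ to agree with $\mathcal{P}$ on $S_n$ using finitely many elementary moves, all supported inside $S_n$. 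The delicate point is to perform these modifications ``one shell at a time'' so that the curves used in the $n$-th batch are \emph{far} from those used in earlier batches — this is exactly why the $\infty$-edges in Definition~\ref{G0} are restricted to far curves — allowing infinitely many simultaneous moves to be packaged into finitely many edges of $\mathcal{G}_0(S)$, or even a single $\infty$-edge after regrouping.

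I expect the main obstacle to be the bookkeeping in the $\mathcal{G}_{0}(S)$ case: one must arrange that the successive finite-support modifications are genuinely disjoint and pairwise far, that each individual modification is a composition of \emph{elementary} moves (so the supporting subsurfaces have complexity exactly $1$), and that the resulting sequence of ``batched'' moves can be realized as a path in $\mathcal{G}_0(S)$ — i.e., that each batch, being a countable family of elementary moves on pairwise far curves, is a legitimate edge. A subtlety is that an elementary move performed deep inside $S_n$ might a priori affect curves near $\partial S_n$; I would avoid this by always working in the interior and leaving a buffer region (a collar of curves of $\mathcal{P}$ that are not yet touched) between the active region and the previously-fixed region, so that ``far'' is guaranteed. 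Once this stratified scheme is set up, concatenating the countably many batches and observing that their union is a valid edge-path (or grouping consecutive batches whose supports are pairwise far into single $\infty$-edges) yields a path from $X$ to $\mathcal{P}$, proving $\mathcal{G}_0(S)$ connected.
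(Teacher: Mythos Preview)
Your proposal takes a very different route from the paper. The paper's proof is essentially a two-line reduction: connectedness of $\mathcal{G}_\infty(S)$ and of the \emph{original} $\mathcal{G}_0(S)$ of Fossas--Parlier (in which simultaneous elementary moves on pairwise \emph{contiguous} curves are allowed) is simply quoted from \cite[Theorem~3.2]{FP}. The only new content is for the modified $\mathcal{G}_0(S)$, and here the paper does not redo any Hatcher--Thurston style argument. Instead it observes that every edge of the original graph can be replaced by a finite path in the modified graph: since each curve in a pants decomposition has only finitely many contiguous curves, an $\infty$-edge involving contiguous curves can be split into finitely many $\infty$-edges, each using only pairwise far curves. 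This immediately transfers connectedness from the original $\mathcal{G}_0(S)$ to the modified one.

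By contrast, you attempt direct constructions. For $\mathcal{G}_0(S)$ your exhaustion-and-buffer scheme is a reasonable outline of what \cite{FP} presumably does, so you are reproving the cited result rather than invoking it; moreover you never isolate the one genuinely new point (passing from ``far or contiguous'' edges to ``far only'' edges), which is the only thing the paper actually proves. For $\mathcal{G}_\infty(S)$ your sketch has a real gap: you claim a multicurve $\nu$ that is simultaneously disjoint from the arbitrary vertex $\mu$ \emph{and} from the base pants decomposition $\mathcal{P}$, but neither of your suggested constructions (``a sub-multicurve of $\mathcal{P}$'' or ``short curves inside the components of $S\setminus\mu$'') achieves both disjointness conditions at once, and you give no mechanism to reconcile them. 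Something in this spirit can be made to work---indeed the proof of Theorem~\ref{MT} in Section~\ref{S4} builds exactly such intermediate multicurves using the level structure and the collar lemma---but it is substantially more delicate than your sketch suggests, and certainly not ``almost immediate from the definition''.
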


\begin{proof}
The cases $k=\infty$ and $k = 0$ for the original graph $\mathcal{G}_{0}(S)$ defined in \cite{FP}, are proven in \cite[Theorem 3.2]{FP}. We only need to prove the proposition for our modified $\mathcal{G}_{0}(S)$. However, it is always possible to decompose an edge corresponding to infinitely many elementary moves performed simultaneously on pairwise far or contiguous curves into a finite sequence of infinitely many elementary moves performed simultaneously only on pairwise far curves, since every curve of a pants decomposition has a finite number of contiguous curves. Therefore, our version of $\mathcal{G}_{0}(S)$ is also connected.
\end{proof}

Finally, in \cite{FP} it is shown that $\rank(\mathcal{G}_{0}(S)) = \infty$ \cite[Theorem 4.4]{FP}. Thus $\diam(\mathcal{G}_{0}(S)) = \infty$. In the next section we will prove that $\diam(\mathcal{G}_{\infty}(S)) < \infty$ for every infinite-type surface, generalizing a result of the aforementioned paper.

\section{The diameter of $\mathcal{G}_{\infty}(S)$}\label{S4}

The goal of this section is to prove Theorem \ref{MT}. To do that, we will first subdivide $S$ in subsurfaces using a family of separating curves that will be called a system of $i$-levels $\Lambda_i(S)$. Then, for every two vertices $\mu, \nu \in \mathcal{G}_{\infty}(S)$ we will build two vertices $v', w' \in \mathcal{G}_{\infty}(S)$ such that $\mu, v',w',\nu$ is a path in $\mathcal{G}_{\infty}(S)$.

\begin{defi}
Let $\alpha \subset S$ be an essential simple closed curve. Then $\alpha$ is a \textit{torus curve} if $S - \alpha = S' \sqcup S''$, where $S'$ is homeomorphic to a torus with one boundary component.
\end{defi}
Consider the maximal set $\mathcal{T}$ of torus curves in $S$. Then $S - \mathcal{T} \simeq S_0 \cup \bigsqcup_{\alpha \in \mathcal{T}} S_{1,1},$ where $S_0$ has genus zero and (possibly) boundary components. Let $\mathcal{P}$ be a pants decomposition of $S_0$ such that $S_0$ is upper bounded with respect to it. We define $\Gamma(S_0)$ as the graph whose vertices are in correspondence one-to-one with every pair of pants and every boundary component of $S_0$. Two vertices span and edge in $\Gamma(S_0)$ if they represent two pair of pants that share a curve in $\mathcal{P}$, or if they represent a boundary component of $S_0$ and the pair of pants to which it belongs. Observe that $\Gamma(S_0)$ is an infinite tree, because $S_0$ is a surface of genus zero.  

Now, the set of vertices of $\Gamma(S_0)$ can be divided into two families: $3$-degree vertices representing pair of pants, and $1$-degree vertices representing boundary components of $S_0$. We denote the first family by $V_3(\Gamma(S_0))$ or simply $V_3$. Equivalently we divide the set of edges of $\Gamma(S_0)$ into the subset that link two $3$-degree vertices and the ones which link a $3$-degree vertex with a $1$-degree vertex. We denote the first family by $E_3(\Gamma(S_0)) = E_3$. As a help for the reader the construction of $\Gamma(S_0)$ is depicted in Figure \ref{cons}: \begin{figure}[H]
\centering
\includegraphics[width=1\textwidth]{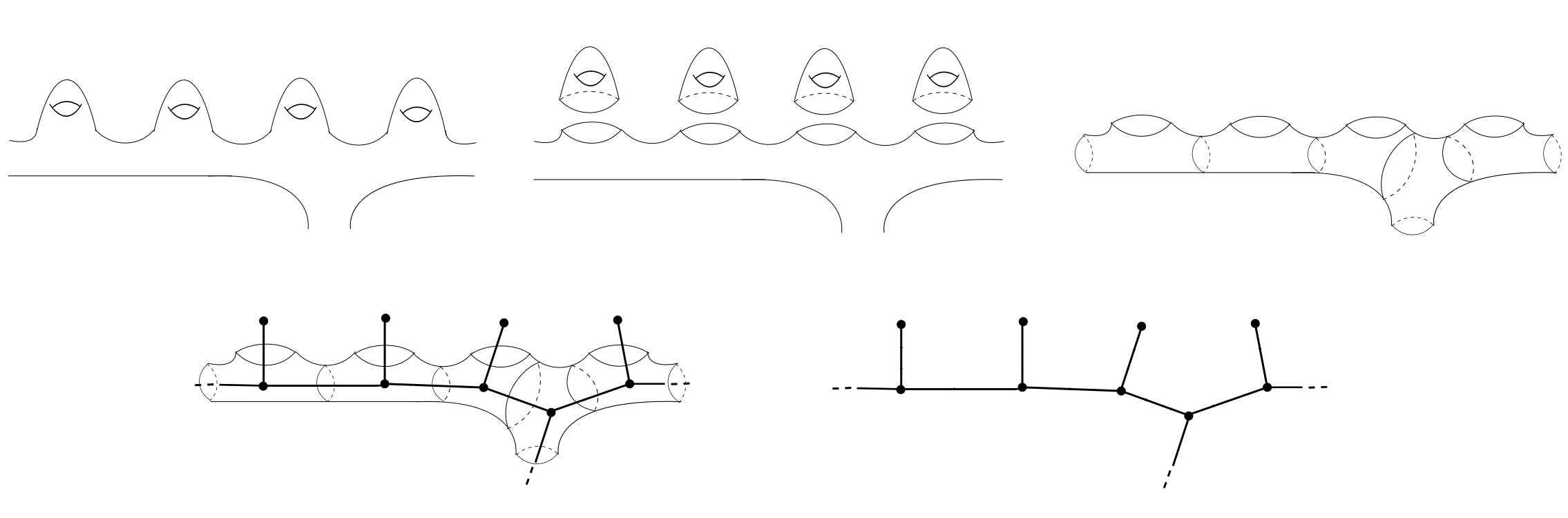} 
\caption{}
\label{cons}
\end{figure} We take $\Gamma_3 = V_3 \cup E_3$. Note that $\Gamma_3$ is also an infinite tree because it is a connected subgraph of $\Gamma$. Consider, for every edge $e \in E_3$, the simple closed curve $\gamma \in \mathcal{P}$ which is the shared boundary between the two pairs of pants associated to the endpoints of $e$. By an abuse of notation we identify $e$ with $\gamma$. This curve is always separating in $S_0$ and, in addition, in $S$. Let $d_{\Gamma_{3}}$ be the distance in $\Gamma_3$, which is the restriction of the usual distance associated to $\Gamma(S_0)$. We consider the following family of curves: starting in an arbitrary edge $\gamma^0 \in E_3$, $\gamma^0$ will be the \textit{$0$-level} of $S$, $ \Lambda_0(S)$. Then we define:
\begin{center}
$ \Lambda_i(S) = i-level \ $of$ \ S  = \{\gamma \in E_3: d_{\Gamma_3}(\gamma, \gamma^0)= i\} = \{\gamma^i_k\}_{k \in N(i)}$,
\end{center}
where $N(i) < \infty$ is the number of curves in $\Lambda_i(S)$. Note that every edge in $E_3$ belongs to $\Lambda_i$ for a unique $i \in \N$, because $\Gamma_3(S_0)$ is a tree. Figure \ref{levels} shows an example of $i$-levels: 
\begin{figure}[H]
\centering
\includegraphics[width=0.45\textwidth]{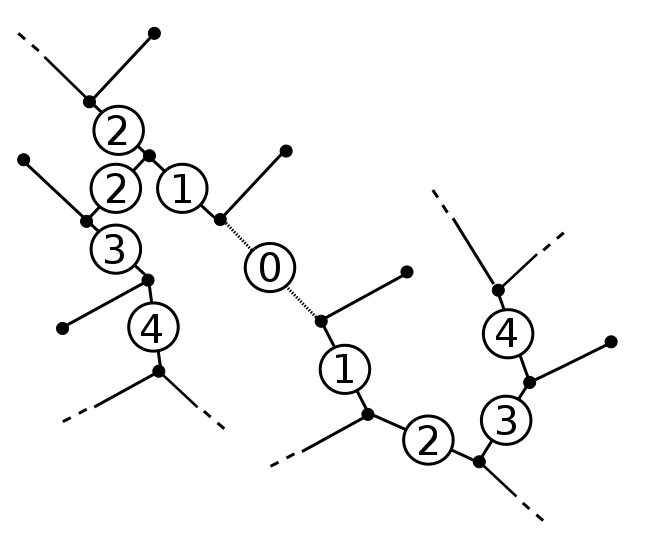} 
\caption{}
\label{levels}
\end{figure}

\begin{proof}[Proof of Theorem \ref{MT}]
For $\mu, \nu \in \mathcal{G}_{\infty}(S)$, take two pants decompositions $v,w \in \mathcal{G}_{\infty}(S)$, such that $\mu \subset v$ y $\nu \subset w$. Consider $L  = \max\{L_v,L_w\}$, where:
\begin{align*}
L_v & = \sup\{\ell(\alpha_{v}) \vert \ \alpha_{v} \ \mbox{is a curve of} \ v \}, \\
L_w & = \sup\{\ell(\alpha_{w}) \vert \ \alpha_{w} \ \mbox{is a curve of} \ w \}.
\end{align*}
If we choose $v$ and any curve $\gamma^i_k \in \mathcal{P}$, we define $S_v(\gamma^i_k)$ as the minimal finite subsurface with respect to inclusion, containing all the curves of $v$ that intersect $\gamma^i_k$. Observe that $S \backslash S_v(\gamma^i_k)$ is a set of at most two connected subsurfaces, because $S_v(\gamma^i_k)$ contains $\gamma^i_k$, which is separating in $S$. In addition $\kappa(S_v(\gamma^i_k)) < \infty$, by the collar lemma \cite[Lemma 13.6]{FM}. Otherwise, $\gamma^i_k$ would be crossed by an infinite number of curves, every of them with a collar neighbourhood of a fixed area. So $\gamma^i_k$ would have infinite length, which is a contradiction with the upper-bounded metric hypothesis. We define $S_w(\gamma^i_k)$ in the same way for the pants decomposition $w$.

Now, note that if we take $d_S(\gamma^i_k,\gamma^{i'}_{k'}) > L$, then $S_x(\gamma^i_k)$ and $S_y(\gamma^{i'}_{k'})$ are disjoint for every $x,y \in \{v,w\}$. Indeed, if some curve belongs to $S_x(\gamma^i_k)$ and $S_y(\gamma^{i'}_{k'})$ simultaneously, it cuts both $\gamma^i_k$ and $\gamma^{i'}_{k'}$, so it has length greater than $L$, which cannot be possible. Consider a subsequence of levels $\Lambda_{i_j}$, $j \in \N$, such that: 
\begin{align}
\inf \{ d_S(\gamma^{i_j}_{k_j},\mathcal{N}_{i_j} )\} > L  , \ \forall \ 1 \leq k_j \leq N(i_j),\\
\sup \{ d_S(\gamma^{i_j}_{k_j},\mathcal{N}_{i_j} )\} <\infty , \ \forall \ 1 \leq k_j \leq N(i_j),
\end{align}
where $\mathcal{N}_{i_j}$ is the subset of curves of $\Lambda_{i_{j+1}}$ whose representing edges are the closest to $\gamma^{i_j}_{k_j}$, that is, there is no other edges of  $\Lambda_{i_j}$ between them.
We point out that it is always possible to find a subset of levels which fulfils assumptions $(3)$ and $(4)$ because of the upper-bounded hyperbolic metric hypothesis. As in Section \ref{S2}, we construct every infinite-type surface from a pants decomposition $\mathcal{P}$ which satisfies $(3)$ and $(4)$ simultaneously. 

For even $j \in \mathbb{N}$, let $v' \subset v$ be the multicurve:
\begin{align*}
v' = \Bigg \{ \alpha \in v: \alpha \in \bigcup_{j \in 2\mathbb{Z}} \ \bigcup_{1 \leq k_j \leq \vert \Lambda_{i_j} \vert} S_v(\gamma^{i_j}_{k_j}) \Bigg  \},
\end{align*}

and $w' \subset w$ the multicurve obtained for odd $j \in \mathbb{N}$. Note that $v'$ is a vertex of $\mathcal{G}_{\infty}(S)$ even if some of the subsurfaces $S_v(\gamma^{i_j}_{k_j})$ need not be disjoint for a fixed level $\Lambda_{i_j}$. This could happen when the subsurfaces are defined by curves in the same level such that the distance between them is less than $L$. If this is the case, the distance between the closest curves of two consecutive levels of the subsequence is bounded above by $(4)$, and so is the complexity of every connected component of $S \backslash v'$. Thus $v'$ is a vertex of $\mathcal{G}_{\infty}(S)$. By a similar argument, $w'$ also is a vertex of $\mathcal{G}_{\infty}(S)$.

Now, $\mu$ and $v'$ span an edge by construction because $v'$ and $\mu$ are subsets of the same pants decomposition $v$, so they can be realized disjointly on $S$. The same occurs with $\nu$ and $w'$ with respect to $w$. Finally, $v'$ and $w'$ span an edge because every component of $v'$ is disjoint from any component of $w'$, as they belong to subsurfaces that are distance at least $L$.\end{proof}

\section{Proof of Theorem \ref{MT2}}\label{S5}

In this section we prove Theorem \ref{MT2}. First we study some loops in $\mathcal{G}_0(S)$ which are essential in the proof: $\infty$-alternating squares. Finally, we define the map $\phi: \Aut_L( \mathcal{G}_0(S)) \rightarrow \Aut(C(S))$ and prove that it is an isomorphism. Since $\Aut(C(S)) \simeq \Mod^\pm (S)$, as shown in \cite{BDR, HF}, the main result follows. The most difficult point is to prove that $\phi$ is well defined, so we will use the previous loops to achieve that. This section is heavily inspired in the proof for finite-type surfaces, due to Margalit \cite{DM}.

\begin{defi}
A \textit{path} in $\mathcal{G}_0(S)$ is a set $X_1,...,X_{k+1}$ of vertices such that $X_i$ and $X_{i+1}$ span an edge $\forall i\in\{ 1,...,k \}$. If $X_1 = X_{k+1}$, we call it a \textit{loop} and omit the repeated vertex. 

We define a 1-\textit{triangle} $\mathcal{T} = \{X_1,X_2,X_3\}$ as a loop of three vertices which span a 1-edge pairwise \cite[\S 3.1]{DM}.
\end{defi}

\begin{lemma}[Characterization of 1-triangles] Let $T = \{X_1,X_2,X_3\}$ be a loop in $\mathcal{G}_0(S)$. Then $T$ is a 1-triangle if and only if there exists a deficiency-1 multicurve $\mu$ such that $X_i = \mu \sqcup \alpha_i$, where $\alpha_i$ and $\alpha_j$ intersect minimally pairwise.
\end{lemma}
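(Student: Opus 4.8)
The plan is to prove the two implications separately. The easy direction is ($\Leftarrow$): if such a deficiency-1 multicurve $\mu$ exists, then by Definition \ref{EM} each pair $X_i, X_j$ differs by exactly one elementary move (supported on the complexity-one subsurface $S\setminus\mu$), hence they span a 1-edge, and the loop $\{X_1,X_2,X_3\}$ is by definition a 1-triangle. So the content is in ($\Rightarrow$): assume $T=\{X_1,X_2,X_3\}$ is a 1-triangle, i.e.\ each consecutive pair differs by exactly one elementary move, and produce the common deficiency-1 multicurve.

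First I would unpack the hypothesis: since $X_1$ and $X_2$ differ by one elementary move, there is a deficiency-1 multicurve $\mu_{12}$ with $X_1 = \mu_{12}\sqcup\alpha_1$ and $X_2 = \mu_{12}\sqcup\alpha_2$, $\alpha_1, \alpha_2$ intersecting minimally, and (as noted after Definition \ref{EM}) $X_1\cap X_2 = \mu_{12}$. Similarly $X_2\cap X_3 = \mu_{23}$ and $X_1\cap X_3 = \mu_{13}$ are deficiency-1 multicurves. The goal is to show $\mu_{12} = \mu_{23} = \mu_{13}$; call the common multicurve $\mu$, and set $\alpha_i$ to be the unique curve of $X_i$ not in $\mu$. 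The key combinatorial observation is a counting one: from $X_1 = \mu_{12}\sqcup\alpha_1$ and the fact that $\mu_{12}$ is deficiency-1 (so $\mu_{12}$ has exactly one fewer curve than a pants decomposition), we get that $X_1$ and $X_2$ each contain $\mu_{12}$ together with exactly one extra curve; and $\alpha_1\notin X_2$, $\alpha_2\notin X_1$ because $\alpha_1$ intersects $\alpha_2$. So $|X_1\setminus X_2| = |X_2\setminus X_1| = 1$ (working with the pants decompositions as locally finite sets of isotopy classes). The same holds for the pairs $(2,3)$ and $(1,3)$.

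The heart of the argument is then the following: consider $X_1\cap X_3 = \mu_{13}$. I claim $\alpha_2 \notin X_1$ and $\alpha_2\notin X_3$, while $\alpha_2$ together with $\mu_{12}$ gives $X_2$ and with $\mu_{23}$ gives $X_2$, forcing $\mu_{12}=\mu_{23}=X_2\setminus\{\alpha_2\}$ — but one must be careful, as \emph{a priori} the "extra curve" of $X_2$ over $X_1$ need not be $\alpha_2$ only if $X_1\cap X_2$ were larger than $\mu_{12}$, which it is not. So $\mu_{12} = X_2\setminus\{\alpha_2\} = \mu_{23}$. Now $\alpha_1 = X_1\setminus\mu_{12}$ and $\alpha_3 = X_3\setminus\mu_{23} = X_3\setminus\mu_{12}$, so both $X_1$ and $X_3$ are $\mu_{12}$ plus one curve, hence $X_1\cap X_3 \supseteq \mu_{12}$; since $X_1\cap X_3 = \mu_{13}$ is itself deficiency-1 (one curve short of a pants decomposition) and $\mu_{12}$ is deficiency-1 and contained in it, we get $\mu_{13}=\mu_{12}$. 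Thus $\mu := \mu_{12}=\mu_{23}=\mu_{13}$ works, $X_i = \mu\sqcup\alpha_i$, and it remains to see the $\alpha_i$ pairwise intersect minimally: $\alpha_1,\alpha_2$ and $\alpha_2,\alpha_3$ do by hypothesis (they realize the elementary moves on the complexity-one surface $S\setminus\mu$), and since $S\setminus\mu$ has complexity one it is $S_{1,1}$ or $S_{0,4}$, where any two distinct essential curves $\alpha_1\ne\alpha_3$ automatically intersect minimally (this is a standard fact: on $S_{1,1}$ distinct curves have $i=1$, on $S_{0,4}$ distinct curves with $i\le 2$ have $i=2$ — and here $\alpha_1\ne\alpha_3$ since $X_1\ne X_3$). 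I expect the main obstacle to be the bookkeeping that pins down the "extra curve" of $X_2$ to be \emph{exactly} $\alpha_2$ and not something else — i.e.\ ruling out that the three pairwise-shared deficiency-1 multicurves could be genuinely different while still pairwise adjacent — and handling carefully the $S_{0,4}$ case where "intersect minimally" is $i(\alpha,\beta)=2$ rather than $1$, so that distinctness of curves alone gives minimal intersection. Once those points are nailed down the rest is the counting argument above.
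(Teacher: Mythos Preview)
Your overall strategy --- identify the three pairwise intersections $\mu_{12},\mu_{23},\mu_{13}$ and argue they coincide --- is exactly the paper's approach, just spelled out in more detail. The paper compresses the whole thing into the line ``since $X_3$ spans a 1-edge with both $X_1$ and $X_2$, $X_1\cap X_2 = X_1\cap X_2\cap X_3 = \mu$''; the implicit reason is the contrapositive of your ``main obstacle'': if some $\gamma\in\mu_{12}$ were missing from $X_3$, then (because $|X_1\setminus X_3|=|X_2\setminus X_3|=1$) we would have $X_1\setminus X_3=\{\gamma\}=X_2\setminus X_3$, forcing both $\alpha_1$ and $\alpha_2$ into $X_3$, impossible since they intersect. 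That one sentence is what you flagged as the bookkeeping you still needed; once you insert it, your argument and the paper's are the same.

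There is, however, a genuine error in your final paragraph. The ``standard fact'' you invoke --- that on $S_{1,1}$ any two distinct essential curves have $i=1$ --- is false (e.g.\ curves of slopes $0$ and $2$ on the once-punctured torus have intersection number $2$). So you cannot deduce that $\alpha_1$ and $\alpha_3$ intersect minimally from distinctness alone. Fortunately you do not need this: $T$ being a 1-triangle means \emph{all three} pairs span 1-edges, in particular $X_1X_3$ is a 1-edge, so by Definition~\ref{EM} the curves $\alpha_1=X_1\setminus\mu$ and $\alpha_3=X_3\setminus\mu$ already intersect minimally. Drop the incorrect ``standard fact'' and invoke the $X_1X_3$ edge directly, as the paper does.
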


\begin{proof}
$\Rightarrow$) By definition of elementary move (Definition \ref{EM}), $X_1 \cap X_2 = \mu$, where $\mu$ is a curve of deficiency 1. Since $X_3$ span a 1-edge with both $X_1$ and $X_2$, $X_1 \cap X_2 = X_1 \cap X_2 \cap X_3 = \mu$. So $X_i = \mu \sqcup \alpha_i$, where $\alpha_i$, $\alpha_j$ intersect minimally for $i \neq j$. 

$\Leftarrow$) Every two different vertices of $T$ differ by an elementary move. Thus $T$ is a 1-triangle. \end{proof}

\begin{corollary}[Transitivity]\label{TR}
If two 1-triangles $\mathcal{T}$ and $\mathcal{T}'$ share a 1-edge, then there exists a deficiency-1 multicurve $\mu$ such that every vertex of both $\mathcal{T}$ and $\mathcal{T}'$ contains $\mu$.
\end{corollary}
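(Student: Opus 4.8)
The plan is to deduce Corollary~\ref{TR} directly from the Characterization of 1-triangles (the preceding lemma), by exploiting the fact that the common edge of $\mathcal{T}$ and $\mathcal{T}'$ already determines a deficiency-1 multicurve, and then checking that this same multicurve works for the third vertex of each triangle.

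First I would apply the lemma to $\mathcal{T}$: there is a deficiency-1 multicurve $\mu$ with $\mathcal{T} = \{\mu\sqcup\alpha_1, \mu\sqcup\alpha_2, \mu\sqcup\alpha_3\}$, where the $\alpha_i$ intersect minimally pairwise. Say the shared 1-edge is the edge between $\mu\sqcup\alpha_1$ and $\mu\sqcup\alpha_2$. Now apply the lemma to $\mathcal{T}'$: there is a deficiency-1 multicurve $\mu'$ with $\mathcal{T}' = \{\mu'\sqcup\beta_1, \mu'\sqcup\beta_2, \mu'\sqcup\beta_3\}$, with $\mu'\sqcup\beta_1 = \mu\sqcup\alpha_1$ and $\mu'\sqcup\beta_2 = \mu\sqcup\alpha_2$ (after relabelling). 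The key point is then that $\mu$ and $\mu'$ must coincide: by Definition~\ref{EM}, the elementary move between two pants decompositions determines the common deficiency-1 multicurve as their intersection, so $\mu = (\mu\sqcup\alpha_1)\cap(\mu\sqcup\alpha_2) = (\mu'\sqcup\beta_1)\cap(\mu'\sqcup\beta_2) = \mu'$ (using that $\alpha_1 \neq \alpha_2$ and $\beta_1 \neq \beta_2$, since the curves of a 1-triangle are pairwise distinct — they intersect minimally, hence nontrivially). Consequently $\beta_i = \alpha_i$ as sets of curves for $i=1,2$ as well, but more importantly every vertex of $\mathcal{T}'$ is of the form $\mu\sqcup\beta_i$ and hence contains $\mu$. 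Since every vertex of $\mathcal{T}$ is also of the form $\mu\sqcup\alpha_i$, the multicurve $\mu$ is contained in all six vertices (five distinct ones), which is exactly the claim.

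I do not anticipate a serious obstacle here; the only subtlety is the bookkeeping of which vertex is which and confirming that the curves $\alpha_i$ (resp.\ $\beta_i$) are genuinely distinct so that the intersection identity $\mu = X\cap Y$ is valid — but this is immediate from the definition of a 1-triangle, since consecutive vertices differ by an elementary move and elementary moves change exactly one curve to a curve intersecting it minimally (in particular nontrivially). So the proof is essentially a two-line application of the lemma together with the observation that the deficiency-1 multicurve associated to an elementary move is canonically the intersection of its two endpoints.

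\begin{proof}
By the Characterization of 1-triangles applied to $\mathcal{T}$, there is a deficiency-1 multicurve $\mu$ and curves $\alpha_1,\alpha_2,\alpha_3$, pairwise intersecting minimally, with $\mathcal{T} = \{\mu\sqcup\alpha_1,\ \mu\sqcup\alpha_2,\ \mu\sqcup\alpha_3\}$. We may assume the shared 1-edge of $\mathcal{T}$ and $\mathcal{T}'$ joins $X := \mu\sqcup\alpha_1$ and $Y := \mu\sqcup\alpha_2$. Since $\alpha_1$ and $\alpha_2$ intersect minimally, $\alpha_1\neq\alpha_2$, hence $X\cap Y = \mu$.

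Applying the lemma to $\mathcal{T}'$, there is a deficiency-1 multicurve $\mu'$ and curves $\beta_1,\beta_2,\beta_3$, pairwise intersecting minimally, with $\mathcal{T}' = \{\mu'\sqcup\beta_1,\ \mu'\sqcup\beta_2,\ \mu'\sqcup\beta_3\}$, and after relabelling $\mu'\sqcup\beta_1 = X$, $\mu'\sqcup\beta_2 = Y$. As before $\beta_1\neq\beta_2$, so $\mu' = X\cap Y = \mu$. Therefore every vertex of $\mathcal{T}'$ equals $\mu\sqcup\beta_i$ for some $i$, and every vertex of $\mathcal{T}$ equals $\mu\sqcup\alpha_i$ for some $i$; in all cases the vertex contains $\mu$. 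This proves the corollary.
\end{proof}
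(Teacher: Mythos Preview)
Your proof is correct and follows essentially the same approach as the paper: both identify the deficiency-1 multicurve as the intersection $X_1\cap X_2$ of the two shared vertices, and then observe that this same multicurve must be contained in the remaining vertex of each triangle. The paper's argument is slightly more compressed (it writes $X\cap X_1\cap X_2 = \mu = X'\cap X_1\cap X_2$ directly), but the underlying idea is identical.
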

\begin{proof}
Suppose that $\mathcal{T} = \{X_1,X_2,X\}$ and $\mathcal{T}'= \{X_1,X_2,X'\}$. Since $X$ and $X'$ span a 1-edge with both $X_1$ and $X_2$, $X \cap X_1 \cap X_2 = \mu = X' \cap X_1 \cap X_2$.
\end{proof}
\begin{defi} \cite[\S 2]{RM}
The standard \textit{Farey graph} is the simplicial graph whose vertices are:
\begin{align*}
\bigg \{ \frac{p}{q} \ : \ p,q \in \Z, \frac{p}{q} \ \mbox{is irreducible} \ \bigg\} \cup  \bigg \{ \frac{1}{0} = \infty  \bigg\},
\end{align*} 
where two vertices $\displaystyle \frac{p}{q}$ and $\displaystyle \frac{s}{t} $ span an edge if $\vert pt-qs \vert = 1$.
\end{defi}
The standard Farey graph can be realized as an ideal triangulation of the hyperbolic disk, as it is depicted in Figure \ref{DFG}:
\begin{figure}[H]
\labellist
\pinlabel $1$ at 225 430
\pinlabel $-1$ at 225 15
\pinlabel $0$ at 425 220
\pinlabel $\infty$ at 25 220
\pinlabel $2$ at 80 380
\pinlabel $3$ at 40 300
\pinlabel $\frac{1}{3}$ at 410 300
\pinlabel $\frac{1}{2}$ at 370 380
\pinlabel $\frac{3}{2}$ at 150 420
\pinlabel $\frac{2}{3}$ at 300 420
\pinlabel $-\frac{1}{2}$ at 370 70
\pinlabel $-2$ at 80 70
\endlabellist
\centering
\includegraphics[width=0.34\textwidth]{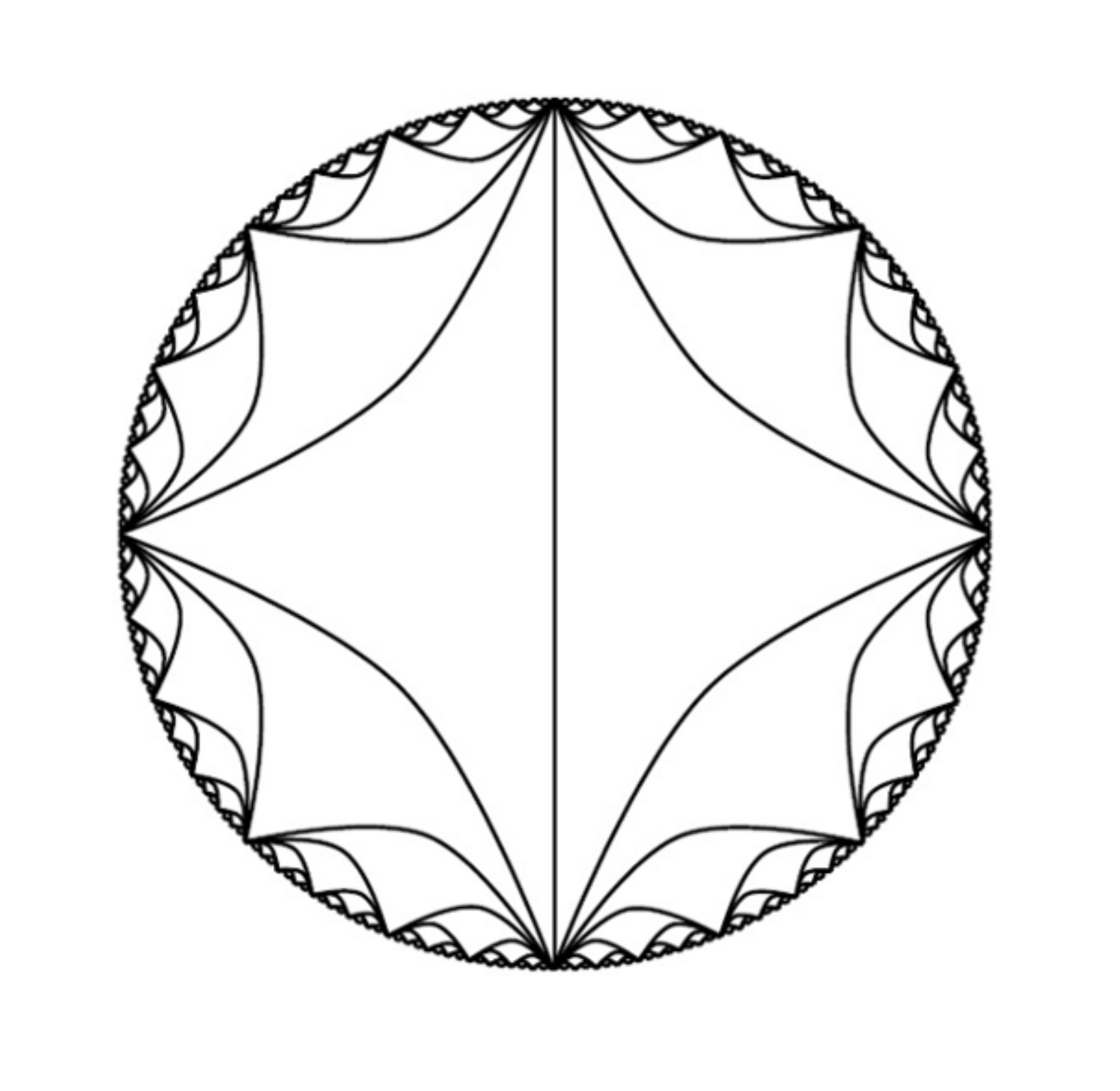}
\caption{}
\label{DFG}
\end{figure}

\begin{defi}
A \textit{1-Farey graph} is a subgraph of $\mathcal{G}_0(S)$ which is isomorphic to the standard Farey graph and whose edges are all 1-edges. 
\end{defi}

Note that for every pair of vertices in a 1-Farey graph which span an edge, there are always two different 1-triangles which contain them. 

\begin{lemma}[Transitivity for 1-Farey graphs]\label{FG}
For every two vertices $X$, $X'$ in a 1-Farey graph there is a sequence of 1-triangles $\{\mathcal{T}_1,...,\mathcal{T}_n\}$ such that $X \in \mathcal{T}_1$, $X' \in \mathcal{T}_n$ and every two consecutive 1-triangles $\mathcal{T}_i$, $\mathcal{T}_{i+1}$ have two vertices in common.
\end{lemma}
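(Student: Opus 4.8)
The plan is to exploit the concrete combinatorics of the standard Farey graph together with Corollary \ref{TR}. Recall that two vertices of a 1-Farey graph which span an edge are contained in exactly two 1-triangles (the note preceding the lemma), corresponding to the two triangles of the Farey triangulation that share that edge. So the first step is to reduce to the case where $X$ and $X'$ span an edge: in the Farey graph any two vertices are joined by an edge-path, and if $X = Y_0, Y_1, \dots, Y_m = X'$ is such a path, then it suffices to prove the statement for each consecutive pair $Y_\ell, Y_{\ell+1}$ and concatenate the resulting sequences of 1-triangles (consecutive blocks overlap because $Y_{\ell+1}$ lies in the last triangle of block $\ell$ and the first triangle of block $\ell+1$, and these two triangles share at least the vertex $Y_{\ell+1}$; in fact, since $Y_{\ell+1}$ has two edges in the Farey path, one can always pick the triangles so that they share an edge).

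Thus I reduce to: given an edge $\{X,X'\}$ of the 1-Farey graph, find a sequence of 1-triangles $\mathcal{T}_1, \dots, \mathcal{T}_n$ with $X \in \mathcal{T}_1$, $X' \in \mathcal{T}_n$, and consecutive ones sharing two vertices. But this is immediate: the edge $\{X,X'\}$ itself lies in a 1-triangle $\mathcal{T}$ (take the third vertex to be one of the two Farey triangles on that edge), so we may take $n=1$ and $\mathcal{T}_1 = \mathcal{T}$, which contains both $X$ and $X'$. Hence the content of the lemma is really the concatenation argument, and the only thing to verify carefully is that consecutive blocks can be glued so as to share \emph{two} vertices rather than just one.

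To handle that gluing, I would argue as follows. At the junction vertex $Y = Y_{\ell+1}$ we have two Farey edges $\{Y_\ell, Y\}$ and $\{Y, Y_{\ell+1}\}$ meeting at $Y$; in the Farey triangulation there is a triangle $\{Y_\ell, Y, Z\}$ and a triangle $\{Y, Y_{\ell+1}, Z'\}$, and by choosing the blocks so that the last triangle of block $\ell$ is $\{Y_\ell, Y, Z\}$ and the first triangle of block $\ell+1$ is a triangle containing the edge $\{Y, Z\}$ — which exists because every edge of the Farey graph is in two triangles — the two consecutive triangles share the edge $\{Y, Z\}$, i.e. two vertices. Since the Farey graph is connected and each of its edges lies in exactly two triangles with all three sides being 1-edges (by the observation before the lemma, applied inductively along the path, using that the "intersect minimally pairwise" condition propagates), every triangle produced is a genuine 1-triangle. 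Concatenating all blocks gives the desired sequence.

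The main obstacle, and the point deserving the most care, is precisely this bookkeeping at the junctions: one must make sure that the choice of triangles in one block is compatible with the choice in the next so that the overlap is an edge, and that all the triangles appearing are 1-triangles and not merely loops of three 1-edges that happen not to bound a triangle of the Farey structure. Both follow from the structure of the Farey triangulation (each edge in exactly two triangles) and from the characterization of 1-triangles (Lemma on characterization of 1-triangles) together with Corollary \ref{TR}, but the combinatorial indexing is the fiddly part. Everything else is a routine unwinding of definitions.
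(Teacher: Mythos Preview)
Your argument is workable but takes a more roundabout route than the paper. The paper observes that the dual graph of the Farey triangulation is an infinite trivalent tree: pick any triangle $\mathcal{T}_1$ containing $X$ and any triangle $\mathcal{T}_n$ containing $X'$, and the unique path in this dual tree from $\mathcal{T}_1$ to $\mathcal{T}_n$ is the desired sequence, since adjacency in the dual means sharing an edge (two vertices). This bypasses edge-paths in the Farey graph entirely and needs no junction bookkeeping.

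Your approach---reduce to an edge-path $X=Y_0,\dots,Y_m=X'$, take one triangle per edge, then glue---is correct in spirit, but the gluing step as written is not complete. At a junction vertex $Y=Y_{\ell+1}$, the triangle $\{Y_\ell,Y,Z\}$ and the triangle on the next edge $\{Y,Y_{\ell+2}\}$ share only the vertex $Y$ in general; your suggestion to start block $\ell+1$ with ``a triangle containing the edge $\{Y,Z\}$'' does not yet reach $Y_{\ell+2}$. What is actually needed is a finite fan of triangles around $Y$ interpolating between the two edges, which exists because the link of any vertex in the Farey triangulation is a copy of $\mathbb{Z}$ and consecutive triangles in that fan share an edge through $Y$. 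Once you say that, the argument is fine---but at that point you have essentially rediscovered the dual-tree path, just parametrised differently. The dual-tree formulation is shorter and avoids the indexing you flagged as fiddly.
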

\begin{proof} 
Since the dual of the standard Farey graph is an infinite tree with all vertices of degree 3, we take the vertices $T_1$ and $T_n$ whose duals are the triangles $\mathcal{T}_1$ and $\mathcal{T}_n$ respectively in the 1-Farey graph. Then, a path in the tree joining $T_1$ and $T_n$ has the desired sequence $\{\mathcal{T}_1,...,\mathcal{T}_n\}$ as dual.
\end{proof}
By Lemma \ref{FG} and Corollary \ref{TR}, there exists a deficiency-1 multicurve $\mu$ such that every pants decomposition of a 1-Farey graph contains $\mu$.

\begin{defi} 
For a multicurve $\mu$, we define $P_\mu$ as the subgraph of $\mathcal{G}_0(S)$ whose vertices are pants decompositions which contain $\mu$. 
\end{defi}

For any pair of multicurves $\mu, \nu \subset S$: \begin{center}
$P_{\mu} \cap P_{\nu} = \begin{cases} P_{\mu \cup \nu} & \mbox{if} \ \mu \cup \nu \ \mbox{is a multicurve}, \\ \emptyset & \mbox{otherwise.} \end{cases}$
\end{center}

\begin{lemma}[Characterization of 1-Farey graphs in $\mathcal{G}_0(S)$]\label{CFG}
Let $F \subset \mathcal{G}_0(S)$ be a subgraph. Then $F$ is a 1-Farey graph if and only if there exists a multicurve $\mu$ of deficiency 1 such that $F = P_{\mu}$.
\end{lemma}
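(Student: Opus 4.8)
The plan is to prove both implications by identifying, for a multicurve $\mu$ of deficiency $1$, the graph $P_\mu$ with a copy of the standard Farey graph, and then invoking a rigidity property: a subgraph of the Farey graph that is abstractly isomorphic to the Farey graph must be the whole graph.

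For the implication $(\Leftarrow)$ I would argue as follows. If $\mu$ has deficiency $1$, then $S\setminus\mu$ has exactly one component $Y$ that is not a pair of pants, and $Y$ has complexity $1$, so $Y\cong S_{1,1}$ or $Y\cong S_{0,4}$. Hence a pants decomposition of $S$ containing $\mu$ is the same datum as an essential simple closed curve in $Y$, and the vertex set of $P_\mu$ is canonically the set of isotopy classes of essential simple closed curves of $Y$ (since $Y$ is incompressible, isotopy in $Y$ and in $S$ coincide). Two vertices $\mu\sqcup\alpha$, $\mu\sqcup\beta$ of $P_\mu$ differ in a single curve, so they can only be joined by a $1$-edge, and this happens exactly when $\alpha$ and $\beta$ intersect minimally in $Y$; that is, the edge relation of $P_\mu$ is the minimal-intersection relation on curves of $Y$. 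The classical slope parametrisation of the essential curves of $S_{1,1}$ and $S_{0,4}$ \cite{FM} then turns this relation into $|ps-qr|=1$, so $P_\mu$ is isomorphic to the standard Farey graph with all edges $1$-edges, i.e. $P_\mu$ is a $1$-Farey graph.

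For $(\Rightarrow)$, let $F$ be a $1$-Farey graph. By the discussion immediately following Lemma~\ref{FG} there is a deficiency-$1$ multicurve $\mu$ contained in every vertex of $F$; hence $F$ is a subgraph of $P_\mu$, which we have just shown is itself a $1$-Farey graph. It remains to prove $F=P_\mu$, which reduces to the following combinatorial statement about the standard Farey graph $\Phi$: if $F\subseteq\Phi$ is a subgraph and there is a graph isomorphism $\Phi\to F$, then $F=\Phi$. I would prove this using two standard facts about $\Phi$: each $3$-cycle of $\Phi$ bounds a triangle of the Farey tessellation, and each edge of $\Phi$ lies in exactly two such triangles, so the dual graph of $\Phi$ is the trivalent tree appearing in Lemma~\ref{FG}. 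Given an edge $e$ of $F$, the isomorphism $\Phi\to F$ carries the two triangles of $\Phi$ on some preimage of $e$ to two $3$-cycles of $F$ on $e$; these are $3$-cycles of $\Phi$ on $e$, hence they are exactly the two triangles of $\Phi$ containing $e$, and so both of them lie in $F$. Propagating this across triangles that share an edge, and using that the dual tree is connected, one concludes that every triangle of $\Phi$---hence every vertex and edge---lies in $F$, i.e. $F=\Phi$. Taking $\Phi=P_\mu$ gives $F=P_\mu$.

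The step I expect to be the main obstacle is the rigidity statement for $\Phi$: one has to resist assuming that the abstract isomorphism $\Phi\to F$ respects the tessellation, and instead argue purely combinatorially through the ``exactly two $3$-cycles per edge'' property and the connectedness of the dual tree. A smaller point that needs care is verifying in $(\Leftarrow)$ that no $\infty$-edge can occur inside $P_\mu$; this is where the precise definition of $\mathcal{G}_0(S)$---an edge is a single elementary move, or simultaneous moves on pairwise \emph{far} curves---together with the fact that the active subsurface $Y$ has complexity $1$, is used.
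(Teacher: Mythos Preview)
Your proof is correct and follows the same route as the paper: for $(\Leftarrow)$ identify $P_\mu$ with the pants complex of the complexity-one component of $S\setminus\mu$, which is the standard Farey graph, and for $(\Rightarrow)$ use Lemma~\ref{FG} to obtain $F\subseteq P_\mu$ and then conclude $F=P_\mu$. Your explicit rigidity argument for the Farey graph (every edge lies in exactly two $3$-cycles and the dual tree is connected) is in fact more careful than the paper's own proof, which simply asserts that $F\cong P(S\setminus\mu)$ together with $V(F)\subseteq P_\mu$ forces $V(F)=P_\mu$ without spelling out why a proper subgraph of the Farey graph cannot be abstractly isomorphic to it.
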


\begin{proof}
 $\Leftarrow)$ Since all vertices in $F$ share a deficiency-1 multicurve $\mu$, every vertex $X_i \in F$ has the form $X_i = \mu \sqcup \alpha_i$. There is an isomorphism from $F$ to $P(S \backslash \mu)$ which takes $X_i = \mu \sqcup \alpha_i$ to $\alpha_i$, where the unique subsurface of positive complexity in $S \backslash \mu$ is homeomorphic to $S_{0,4}$ or $S_{1,1}$. The pants complex of both surfaces is isomorphic to the standard Farey graph \cite[\S 3]{MI}. 

$\Rightarrow)$ Let $F$ be a 1-Farey graph. By Lemma \ref{FG} it follows that $V(F) \subset P_{\mu}$. Since $F$ is isomorphic to $P(S \backslash \mu)$, then $V(F) = P_{\mu}$.
\end{proof}

\begin{lemma}\label{int}
Two different 1-Farey graphs intersect at most in one vertex.
\end{lemma}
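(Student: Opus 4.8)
The plan is to read off everything from Lemma~\ref{CFG} (characterization of 1-Farey graphs) together with the formula for $P_\mu\cap P_\nu$ recorded just before it. By Lemma~\ref{CFG}, every 1-Farey graph in $\mathcal{G}_0(S)$ has the form $P_\mu$ for some deficiency-1 multicurve $\mu$ (and $\mu$ is recovered as the intersection of all vertices of $P_\mu$, so distinct 1-Farey graphs correspond to distinct $\mu$). So let $F=P_\mu$ and $F'=P_\nu$ be two \emph{different} 1-Farey graphs, with $\mu,\nu$ deficiency-1 multicurves and $\mu\neq\nu$. If $\mu\cup\nu$ is not a multicurve, then by the displayed formula $F\cap F'=P_\mu\cap P_\nu=\emptyset$ and there is nothing to prove. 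Hence I may assume $\mu\cup\nu$ is a multicurve, so that $F\cap F'=P_{\mu\cup\nu}$, and it suffices to show that $P_{\mu\cup\nu}$ consists of at most one vertex.

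Next I would show $\mu\cup\nu$ is in fact a pants decomposition, which makes $P_{\mu\cup\nu}=\{\mu\cup\nu\}$ a single vertex. First, $\mu\cup\nu\supsetneq\mu$: if instead $\mu\cup\nu=\mu$, i.e.\ $\nu\subseteq\mu$, then since adjoining each essential, non-boundary-parallel curve to a multicurve strictly lowers the total complexity $\kappa$ of the complement (immediate from additivity of $\kappa(S_{g,b})=3g-3+b$ in both the separating and non-separating cases), $\nu\subsetneq\mu$ would give $\kappa(S\backslash\mu)<\kappa(S\backslash\nu)=1$, which is impossible; so $\nu=\mu$, contradicting $\mu\neq\nu$. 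Therefore $\mu\cup\nu$ contains at least one curve $\gamma\in\nu\backslash\mu$ more than $\mu$, and the same strict-monotonicity gives $\kappa\bigl(S\backslash(\mu\cup\nu)\bigr)\leq\kappa\bigl(S\backslash(\mu\cup\gamma)\bigr)\leq\kappa(S\backslash\mu)-1=0$, so $\kappa\bigl(S\backslash(\mu\cup\nu)\bigr)=0$; thus $\mu\cup\nu$ is maximal, i.e.\ a pants decomposition (it is automatically locally finite, differing from the locally finite collection $\mu$ by at most one curve lying in the unique positive-complexity component of $S\backslash\mu$). Hence $P_{\mu\cup\nu}$ has exactly one vertex, and $F\cap F'$ is at most a single vertex.

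The argument is essentially bookkeeping with complexities; the only point needing a little care is the uniform statement that attaching one essential, non-peripheral simple closed curve to a multicurve drops $\kappa$ of the complement by exactly one, regardless of whether that curve separates the component of $S\backslash\mu$ it lies in — but this is exactly the additivity of $3g-3+b$ already invoked in Section~\ref{S2}. I therefore expect no genuine obstacle in carrying this out.
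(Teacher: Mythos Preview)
Your argument is correct and follows essentially the same route as the paper: write $F=P_\mu$, $F'=P_\nu$ via Lemma~\ref{CFG}, use the displayed formula $P_\mu\cap P_\nu=P_{\mu\cup\nu}$, and observe that for distinct deficiency-1 multicurves $\mu,\nu$ the union $\mu\cup\nu$ (when it is a multicurve) is a pants decomposition. The paper's proof simply asserts this last point, whereas you spell out the complexity bookkeeping; no genuine difference in strategy.
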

\begin{proof}
Let $F$ and $F'$ be two different 1-Farey graphs. By Lemma \ref{CFG}, $F = P_{\mu}$ and  $F'= P_{\nu}$. If $F$ and $F'$ intersect, then $F \cap F' = P_{\mu} \cap P_{\nu} = P_{\mu \cup \nu}.$ As $\mu$ and $\nu$ are different deficiency-1 multicurves, $P_{\mu \cup \nu}$ is non-empty when $\mu \cup \nu$ is a pants decomposition.
\end{proof}

Note that, for any 1-Farey graph $F$, the deficiency-1 multicurve $\mu$ such that $F = P_\mu$ can be identified by intersecting two different vertices of $F$.

\begin{defi}
A \textit{marked 1-Farey graph} is a pair $(F, X)$ where $F$ is a 1-Farey graph and $X$ is one of its vertices.
\end{defi} 

Note that for every marked 1-Farey graph $(F=P_{\mu},X)$ there exists a single curve $\alpha \in X$ such that $\alpha \not\in P_\mu$. We will say that $(F,X)$ \textit{represents} $\alpha$. We will also consider other loops in $\mathcal{G}_0(S)$. Alternating loops, which are defined below, are introduced in \cite[\S 4]{DM}. First we need a definition:

\begin{defi}
A loop $\mathcal{L} \subset \mathcal{G}_0(S)$ has deficiency $k$ if the intersection of all its vertices is a deficiency-$k$ multicurve.
\end{defi}
For instance, a 1-triangle is a deficiency-1 loop.
\begin{defi}\cite{DM}
An \textit{alternating loop} $\mathcal{AL} \subset \mathcal{G}_0(S)$ is a deficiency-$2$ loop $\{X_1,X_2,...,X_k\}$, $k \in \{4,5,6\}$,  whose edges are all 1-edges and such that there is no 1-Farey graph in $\mathcal{G}_0(S)$ containing any three consecutive vertices $X_i,X_{i+1},X_{i+2}$.
\end{defi}

Observe that in an alternating loop $X_1 \cap X_2 \cap...\cap X_n = X_i \cap X_{i+1} \cap X_{i+2} \ \forall i$.

\begin{remark}\label{ALP} \cite[\S 5]{DM}
Any alternating loop $\mathcal{AL}$ is preserved by the action of $A \in \Aut_L(\mathcal{G}_0(S))$, since it is defined only in simplicial terms.
\end{remark}

\begin{defi}
An \textit{$\infty$-alternating square} $\mathcal{S} \subset \mathcal{G}_0(S)$ is a loop of length $4$ with the following structure, up to cyclic reordering: $$X_1 \oset{1}{-} X_2  \oset{$\infty$}{-} X_3 \oset{1}{-} X_4 \oset{$\infty$}{-} X_1,$$
where $X_1$ (resp. $X_2$) do not span an $\infty$-edge with $X_3$  (resp. $X_4$).
\end{defi}

Let $\mathcal{S}$ be an $\infty$-alternating square. Without loss of generality: $$X_1 = \{\alpha, \sigma_1,..., \sigma_m, \kappa_1,...,\kappa_n, \lambda_1,...\},$$ $$X_2 = \{\alpha', \sigma_1,..., \sigma_m, \kappa_1,...,\kappa_n, \lambda_1,...\},$$
where $\{\sigma_i\}_{i = 1}^m$ are the sister curves of both $\alpha$ and $\alpha'$, $\{\kappa_i\}_{i = 1}^n$ the contiguous curves and $\{\lambda_i\}_{i = 1}^\infty$ the far curves. Note that $1 \leq m \leq 4 $ and $1 \leq n \leq 8$. Now, since $X_2X_3$ is an $\infty$-edge and $X_1$, $X_3$ do not span an $\infty$-edge, the elementary moves are not performed only on far curves. Furthermore, no elementary move is performed on a sister curve because $\mathcal{S}$ has length 4. So: 
$$X_3 = \{\alpha', \sigma_1,..., \sigma_m, \kappa'_1,...,\kappa'_j,\kappa_{j+1},...,\kappa_n, \lambda_1,\lambda'_2,\lambda_3,\lambda'_4,...\},$$
where the infinitely many elementary moves are performed, without loss of generality on $\lambda_{2k}$, $k \in \N$ and some contiguous curves. Observe that all these simultaneous elementary moves must be performed on curves which are pairwise far by the definition of $\infty$-edge. Regarding the 1-edge $X_3X_4$, if $X_4$ contains any $\kappa_i$ or $\lambda_{2k}$, then $X_2$ and $X_4$ span an $\infty$-edge, so it must contain $\alpha$:
$$X_4 = \{\alpha, \sigma_1,..., \sigma_m, \kappa'_1,...,\kappa'_j,\kappa_{j+1},...,\kappa_n, \lambda_1,\lambda'_2,\lambda_3,\lambda'_4,...\}.$$ 
Therefore: $X_1 \cap ... \cap X_4 = X_i \cap X_{i+1} \cap X_{i+2} \ \forall i$.

\begin{remark}\label{SAL}
Any $\infty$-alternating square $\mathcal{S}$ is preserved by the action of $A \in \Aut_L(\mathcal{G}_0(S))$, since it is defined only in simplicial terms.
\end{remark}

\begin{proof}[Proof of Theorem \ref{MT2}]
We proceed to define the isomorphism of Theorem \ref{MT2}. Consider $A \in \Aut_L(\mathcal{G}_0(S))$, so $A$ takes marked 1-Farey graphs to marked 1-Farey graphs. Let $(F,X)$ be an arbitrary marked 1-Farey graph. Then $(F,X)$ represents some curve $v \in C(S)$. Let $A((F,X))$ be the image of $(F,X)$ by $A$, then $A((F,X))$ represents another vertex $w$ in the curve complex. We define $w = \phi(A)(v)$; thus $\phi(A)$ is a map from $C(S)$ to itself. We will prove that $\phi(A)$ is an automorphism of the curve complex, and that $\phi$ is a well defined isomorphism.

\textbf{$\phi(A)$ is well defined}. Let $(F,X)$ and $(F',X')$ be two marked 1-Farey graphs representing the same vertex $v \in C(S)$. We need to prove that $A((F,X))$ and $A((F',X'))$ also represent the same vertex $\phi(A)(v)$. This is true when $X$ and $X'$ differ by a finite number of elementary moves  \cite[\S 5]{DM}. The idea of \cite{DM} is to find a sequence of vertices $W,X,X',Y$ which is contained in an alternating loop $\mathcal{AL}$, where $W,X \in (F,X)$ and $X',Y \in (F',X')$. By the properties of an alternating loop, $v = W \cap X \cap X' = X \cap X' \cap Y$. Now, by Remark \ref{ALP}, $\mathcal{AL}$ is preserved by the action of $A \in \Aut_L(\mathcal{G}_0(S))$, so the sequence $A(W),A(X),A(X'),A(Y)$ is also contained in an alternating loop. Therefore $A(W) \cap A(X) \cap A(X') = A(X) \cap A(X') \cap A(Y)$, so $A((F,X))$ and $A((F',X'))$ represent the same vertex $\phi(A)(v)$. 

When $X$ and $X'$ span an $\infty$-edge, the previous idea can be applied finding a sequence contained in an $\infty$-alternating square $\mathcal{S}$, as we will see. Once this is done, Remark \ref{SAL} will again give us the desired result. We refer the reader to Figure \ref{esq} as a help for the following proof. Let $X = \{\alpha', \mu\}$ and $X'  =  \{\alpha', \mu'\}$ where $F = P_\mu$ and $F' = P_{\mu'}$. Suppose that at least one elementary move of $XX'$ is performed on a contiguous curve of $\alpha'$. Then $X$ and $X'$ have the form: 
\begin{align*}
X  & = \{\alpha', \sigma_1,..., \sigma_m, \kappa_1,...,\kappa_n, \lambda_1,...\}, \\
X' & = \{\alpha', \sigma'_1, \sigma_2,..., \sigma_m, \kappa'_1,...\kappa'_j,\kappa_{j+1},...,\kappa_n, \lambda_1,\lambda'_2,\lambda_3,\lambda'_4,...\},
\end{align*}
Note that two different sister curves of $\alpha$ are respectively sister or contiguous, so at most one sister curve can be replaced in an $\infty$-edge.  Now consider $X_m$, where:
$$X_m = \{\alpha', \mu_m\} = \{\alpha', \sigma'_1,\sigma_2,..., \sigma_m, \kappa_1,...,\kappa_n, \lambda_1,\lambda_2,...\},$$
with $F_m = P_{\mu_m}$. As $X$ and $X_m$ differ by one elementary move, we use the the result in \cite{DM} to conclude that $A((F,X))$ and $A((F_m,X_m))$ represent the same vertex in $C(S)$. Finally, we proceed with the contiguous and far curves of $\alpha'$. Consider the following $\infty$-alternating square: $$\mathcal{S}: W \oset{1}{-} X_{m}  \oset{$\infty$}{-} X' \oset{1}{-} Y \oset{$\infty$}{-} W,$$ where:
\begin{align*}
W & =\{\alpha, \sigma'_1,\sigma_2,..., \sigma_m,\kappa_1,...\kappa_{j},\kappa_{j+1},...,\kappa_n, \lambda_1,\lambda_2,\lambda_3,\lambda_4,...\},\\
X_{m} &=\{\alpha', \sigma'_1,\sigma_2,..., \sigma_m,\kappa_1,...\kappa_{j},\kappa_{j+1},...,\kappa_n, \lambda_1,\lambda_2,\lambda_3,\lambda_4,...\},\\
X' &= \{\alpha', \sigma'_1,\sigma_2,..., \sigma_m,\kappa'_1,...\kappa'_{j},\kappa_{j+1},...,\kappa_n, \lambda_1,\lambda'_2,\lambda_3,\lambda'_4,...\},\\
Y &= \{\alpha, \sigma'_1,\sigma_2,..., \sigma_m,\kappa'_1,...\kappa'_{j},\kappa_{j+1},...,\kappa_n, \lambda_1,\lambda'_2,\lambda_3,\lambda'_4,...\},
\end{align*} 
with $W,X_m \in (F_{m},X_{m})$ and $X',Y \in (F',X')$. We use Remark \ref{SAL} to deduce that $A((F_{m},X_{m}))$ and $A((F',X'))$ represent the same vertex in $C(S)$. Thus $A((F,X))$ and $A((F',X'))$ represent also the same vertex.

If no elementary move is performed on a contiguous curve of $\alpha'$, then $X'$ has the form: $$X' = \{\alpha', \sigma'_1,\sigma_2,..., \sigma_m, \kappa_1,...,\kappa_n, \lambda_1,\lambda'_2,\lambda_3,\lambda'_4,...\}.$$
In that case consider first:
$$X'' = \{\alpha', \sigma'_1,\sigma_2,..., \sigma_m, \kappa'_1,\kappa_2...,\kappa_n, \lambda_1,\lambda'_2,\lambda_3,\lambda'_4,...\}.$$  Observe that $\kappa'_1$ must be far from all $\lambda_{2k}$. As $X'$ and $X''$ differ by one elementary move, the marked 1-Farey graphs $(F',X')$ and $(F'',X'')$ represent the same curve in $C(S)$. Then proceed with $(F'',X'')$ in the same way as before.
\begin{figure}[H]
\centering
\labellist
\pinlabel $F$ at 50 500
\pinlabel $F_m$ at 40 290
\pinlabel $F'$ at 50 90
\pinlabel $X$ at 630 500
\pinlabel $X_m$ at 630 290
\pinlabel $X'$ at 630 90
\pinlabel $W$ at 410 280
\pinlabel $Y$ at 380 70
\pinlabel $1$ at 480 110
\pinlabel $1$ at 480 310
\pinlabel $1$ at 450 520
\pinlabel $1$ at 610 390
\pinlabel $\infty$ at 620 190
\pinlabel $\infty$ at 370 190
\endlabellist
\includegraphics[width=0.38\textwidth]{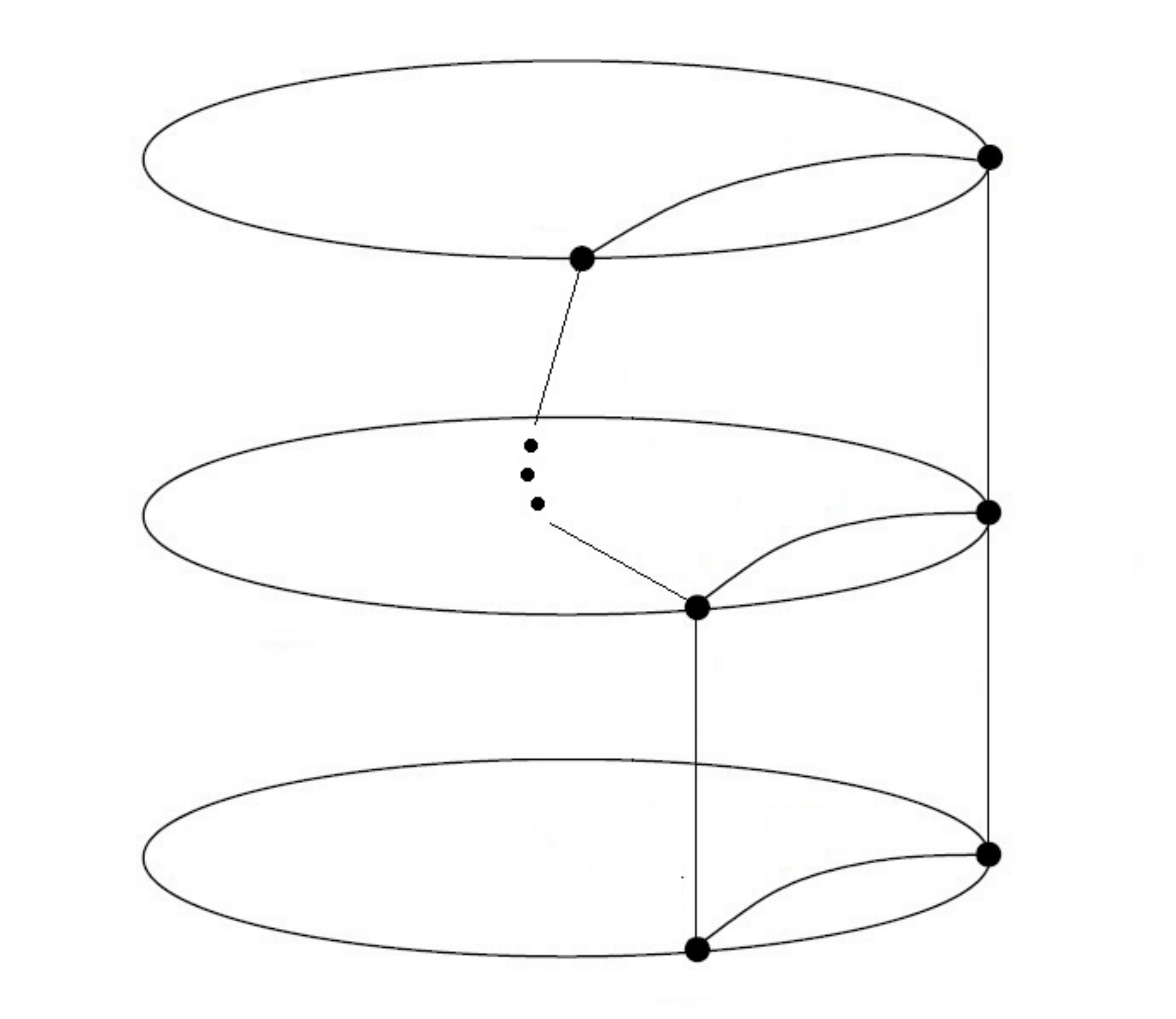}
\caption{}\label{esq}
\end{figure} \textbf{$\phi(A)$ is an automorphism}. We need to prove that two vertices $v,w \in C(S)$ span an edge if and only if $\phi(A)(v)$ and $\phi(A)(w)$ span an edge. For that purpose is enough to prove that $v$ and $w$ span an edge in $C(S)$ if and only if there are two different marked 1-Farey graphs $(F,X)$ and $(F',X)$, which intersect in $X$. This vertex is unique by Lemma \ref{int}. Suppose that $v, w$ are the curves $\alpha$ and $\beta$ respectively. We take a pants decomposition containing $\alpha$ and $\beta$: $X = \{ \alpha, \beta, \gamma_1, \gamma_2,... \}$, and consider the marked $1$-Farey graphs $(F,X)$ and $(F',X)$, where $F = P_{\mu}$ and $F' = P_{\nu}$ with $ \mu = \{\beta, \gamma_1, \gamma_2,... \}$ and $ \nu = \{ \alpha, \gamma_1, \gamma_2,... \}$. Since $A$ preserves the number of intersection points between marked 1-Farey graphs,  $A((F,X))$ and $A((F',X))$ intersect in the point $A(X) = \{\phi(A)(\alpha),\phi(A)(\beta),...\}$. Then, $A(X)$ is a pants decomposition if and only if $\phi(A)(\alpha)$ an $\phi(A)(\beta)$ are disjoint and non-isotopic, that is, they span and edge in $C(S)$. Thus $F$ and $F'$ intersect in $X$ if and only if $v$ and $w$ span an edge in $C(S)$.

\textbf{$\phi$ is an isomorphism}. First, we prove that $\phi(AB)(v) = \phi(A)\phi(B)(v)$ for every $v \in C(S)$. $\phi(AB)(v)$ is represented by $AB((F,X))$,  where $(F,X)$ represents $v$. Furthermore, $w = \phi(B)(v)$ is represented by $B((F,X))$ and hence $\phi(A)\phi(B)(v)$ is represented by $A((F',Y))$, where $(F',Y)$ is a marked $1$-Farey graph that represents $w$. If we choose $(F',Y)$ to be $B((F,X))$, then we have that $\phi(A)\phi(B)(v)$ is represented by $AB((F,X))$ and so $\phi$ is a homomorphism.

We now prove that if $\phi(A)$ is the identity in $\Aut(C(S))$, then $A$ is the identity in $\Aut_L(\mathcal{G}_0(S))$. For $X = \{ \alpha_1, \alpha_2, \alpha_3,... \}$  we choose the collection of marked $1$-Farey graphs $(F_i,P)$, where $F_i = P_{\mu_i}$, with \linebreak $\mu_i = \{ \alpha_1, \alpha_2,..., \alpha_{i-1}, \alpha_{i+1},....\}$, and $v_i$ is the curve $\alpha_i$. By construction, the intersection of all $F_i$ is exactly $X$. In addition, because $A(F)$ represents $\phi(A)(v)$ and $\phi(A)$ is the identity, it follows that the intersection of all $A(F_i) = F_i$ is $X$, so $A(X) = X$.
 
Finally, we will prove the surjectivity of $\phi$ by using the isomorphism $\eta: \Aut(C(S)) \rightarrow \Mod^{\pm}(S)$ for infinite-type surfaces \cite{BDR, HF}. For a vertex $v \in C(S)$, we define $A(v)$ as the action of $\eta (A)$ in $\alpha$. Similarly, for a pants decomposition $X = \{ \alpha_1, \alpha_2, \alpha_3,... \}$, we define $\psi(A)(X)$ as $\{\eta(A)(\alpha_1),\eta(A)(\alpha_2),... \}$: \begin{center}
$\begin{array}{ccccc}
  \Aut_L(\mathcal{G}_0(S)) & \overset{\psi}{\longleftarrow} &  \Aut(C(S)) &   \overset{\eta}{\longleftrightarrow} &\Mod^{\pm}(S)  \\
 \psi(A) & \longleftarrow & A &\longleftrightarrow& \eta(A)   \\
 (F,X) & \longleftrightarrow & v &  \longleftrightarrow & \alpha   \\
 \downarrow & &\downarrow && \downarrow  \\
 \psi(A)((F,X)) & \longleftrightarrow & A(v) & \longleftrightarrow &  \eta(A)(\alpha)  \\
\end{array}$
\end{center}
 It remains to prove that $\phi \circ \psi (A) = A$, which is equivalent to prove that $\psi(A)((F,X)) = (F',Y)$, where $(F',Y)$ represents the vertex $A(v) \in C(S)$. For $F= P_{\mu}$ where $\mu = \{\alpha_2, \alpha_3,...\}$, it follows that $F' = \psi(A)(F) = P_\nu$, where $ \nu = \{\eta(A)(\alpha_2),\eta(A)(\alpha_3),... \}$. The desired result follows from the fact that $X= \{ \alpha, \alpha_2, \alpha_3,... \}$, $\psi(A)(X) = \{\eta(A)(\alpha),\eta(A)(\alpha_2),\eta(A)(\alpha_3),... \}$ and $\eta(A)(\alpha)$ corresponds to $A(v)$.\end{proof}

\section*{Acknowledgements}
The author would like to thank his advisor, Javier Aramayona, for conversations and for suggesting the problem. He also wants to acknowledge financial support from the Spanish Ministry of Economy and Competitiveness, through the “Severo Ochoa Programme for Centres of Excellence in R\&D” (SEV-2015-0554) and the grant MTM2015-67781.
\label{Bibliography}

\bibliographystyle{abbrv} 
\bibliography{Bibliography} 

\begin{thebibliography}{10}

\bibitem{AL}
D.~Alessandrini, L.~Liu, A.~Papadopoulos, W.~Su, and Z.~Sun.
\newblock On {F}enchel-{N}ielsen coordinates on {T}eichmüller spaces of
  surfaces of infinite type.
\newblock {\em Comm. Anal. Geom.}, 20(2):369--394, (2012).

\bibitem{AR}
J.~Aramayona.
\newblock Simplicial embeddings between pants graphs.
\newblock {\em Geometriae Dedicata}, 144(1):115--128, (2010).

\bibitem{AFP}
J.~Aramayona, A.~Fossas, and H.~Parlier.
\newblock Arc and curve graphs for infinite-type surfaces.
\newblock {\em Proc. Amer. Math. Soc.}, 145(11):4995--5006, (2017).

\bibitem{AF}
J.~Aramayona and J.~F. Valdez.
\newblock On the {G}eometry of {G}raphs {A}ssociated to {I}nfinite-type
  surfaces.
\newblock https://arxiv.org/pdf/1605.05600.pdf, (2016).

\bibitem{BA}
J.~Bavard.
\newblock Hyperbolicit\'e du graphe des rayons et quasi-morphismes sur un gros
  groupe modulaire.
\newblock {\em Geom. Topol.}, 20(1):491--535, (2016).

\bibitem{BDR}
J.~Bavard, S.~Dowdall, and K.~Rafi.
\newblock Isomorphisms between big mapping class groups.
\newblock https://arxiv.org/pdf/1708.08383v2.pdf, (2017).

\bibitem{DF}
M.~G. Durham, F.~Fanoni, and N.~G. Vlamis.
\newblock Graphs of curves on infinite-type surfaces with mapping class group
  actions.
\newblock https://arxiv.org/pdf/1611.00841.pdf, (2017).

\bibitem{FM}
B.~Farb and D.~Margalit.
\newblock {\em A Primer on Mapping Class Groups}, volume~49 of {\em Princeton
  Mathematical Series}.
\newblock Princeton Univ. Press, (2012).

\bibitem{FP}
A.~Fossas and H.~Parlier.
\newblock Curve {G}raphs on {S}urfaces of {I}nfinite {T}ype.
\newblock {\em Ann. Acad. Sci. Fenn. Math.}, 40(2):793--801, (2015).

\bibitem{HMF}
J.~Hernández, I.~Morales, and J.~F. Valdez.
\newblock Isomorphisms between curve graphs of infinite-type surfaces are
  geometric.
\newblock https://arxiv.org/pdf/1706.03697.pdf, (2017).

\bibitem{HF}
J.~Hernández and J.~F. Valdez.
\newblock Automorphism groups of simplicial complexes of infinite-type
  surfaces.
\newblock {\em Publ. Mat.}, 61(1):51--82, (2017).

\bibitem{DM}
D.~Margalit.
\newblock Automorphisms of the pants complex.
\newblock {\em Duke Math. J.}, 121(3):457--479, (2004).

\bibitem{RM}
R.~Maungchang.
\newblock Finite rigid subgraphs of the pants graphs of punctured spheres.
\newblock https://arxiv.org/pdf/1303.3873.pdf, (2017).

\bibitem{MI}
Y.~Minsky.
\newblock A geometric approach to the complex of curves on a surface.
\newblock {\em Topology and Teichmüller spaces}, pages 149--158, (1996).

\bibitem{PM}
A.~O. Prishlyak and K.~I. Mischenko.
\newblock Classification of noncompact surfaces with boundary.
\newblock {\em Methods Funct. Anal. Topology}, 13(1):62--66, (2007).

\bibitem{R}
A.~Rasmussen.
\newblock Uniform hiperbolicity of the graphs of nonseparating curves via
  bicorn curves.
\newblock https://arxiv.org/pdf/1707.08283.pdf, (2017).

\bibitem{IR}
I.~Richards.
\newblock On the {C}lassification of {N}oncompact {S}urfaces.
\newblock {\em Trans. Amer. Math. Soc.}, 106:259--269, (1963).

\end{thebibliography}

\end{document}